\newtheorem{theorem}{Theorem}[section]
\newtheorem{corollary}{Corollary}[section]
\newtheorem{proposition}{Proposition}[section]
\theoremstyle{remark}
\theoremstyle{remark}
\theoremstyle{remark}
\newtheorem{remark}{Remark}[section]
\newcommand{\di}{\partial}
\newcommand{\la}{\langle}
\newcommand{\ra}{\rangle}
\newcommand{\rom}[1]{{\rm #1}}
\begin{document}

\makeatletter\@addtoreset{equation}{section}

\begin{center}{\Large \bf
  Meixner class of  non-commutative generalized stochastic processes with freely independent values II. The generating function
}\end{center}

{\large Marek Bo\.zejko}\\
Instytut Matematyczny, Uniwersytet Wroc{\l}awski, Pl.\ Grunwaldzki 2/4, 50-384 Wroc{\l}aw, Poland\\
e-mail: \texttt{bozejko@math.uni.wroc.pl}\vspace{2mm}

{\large Eugene Lytvynov}\\ Department of Mathematics,
Swansea University, Singleton Park, Swansea SA2 8PP, U.K.\\
e-mail: \texttt{e.lytvynov@swansea.ac.uk}\vspace{2mm}

{\small

\begin{center}
{\bf Abstract}
\end{center}

\noindent  
Let $T$ be an underlying space with a non-atomic measure $\sigma$ on it. 
In [{\it Comm.\ Math.\ Phys.}\ {\bf 292} (2009), 99--129] the Meixner class of non-commutative generalized stochastic processes with freely independent values, $\omega=(\omega(t))_{t\in T}$, was characterized  through the continuity of the corresponding orthogonal polynomials.   In this paper, we derive a generating function for these orthogonal polynomials.  The first question we have to answer is: What should serve as a generating function for a system of  polynomials of infinitely many non-commuting variables?
We construct a class of operator-valued functions $Z=(Z(t))_{t\in T}$ such that $Z(t)$ commutes with $\omega(s)$ for any $s,t\in T$. Then a generating function
can be understood as 
$G(Z,\omega)=\sum_{n=0}^\infty \int_{T^n}P^{(n)}(\omega(t_1),\dots,\omega(t_n))Z(t_1)\dotsm Z(t_n)\sigma(dt_1)\dotsm\sigma(dt_n)$,
 where $P^{(n)}(\omega(t_1),\dots,\omega(t_n))$ is (the kernel of the) $n$-th orthogonal polynomial. We derive an explicit form of $ G(Z,\omega)$, which has a resolvent form and resembles the generating function in the classical case, albeit it involves integrals of non-commuting operators. We finally discuss a related problem of the action of the annihilation operators $\partial _t$, $t\in T$. In contrast to the classical case, we prove that the operators $\di_t$  related to the free Gaussian and Poisson processes have a property of globality. This result is genuinely infinite-dimensional, since in one dimension one loses the notion of globality.

 } \vspace{2mm}

\section{Introduction and preliminaries}
This paper serves as a continuation of our research started in \cite{bl}. We recall that the Meixner class of non-commutative generalized stochastic processes with freely independent values
was characterized in  \cite{bl} through the continuity of the corresponding orthogonal polynomials. The main aim of the present paper is to derive the generating function for these orthogonal polynomials.

Let us first briefly recall some known results on the generating function of  Meixner polynomials, in both the classical and free cases.  Below, when speaking of orthogonal polynomials on the real line, we will always assume that their measure of orthogonality has infinite support and is centered.

  According to e.g.\ \cite{chihara} (see also the original paper \cite{Meixner}), the Meixner class of orthogonal polynomials on $\mathbb R$  consists
of all monic orthogonal polynomials $(P^{(n)})_{n=0}^\infty$ whose (exponential) generating function has the form
$$G(z,x):=\sum_{n=0}^\infty \frac{P^{(n)}(x)}{n!}\,z^n=\exp(x\Psi(z)+\Phi(z))=\sum_{k=0}^\infty\frac1{k!}\, (x\Psi(z)+\Phi(z))^k,$$
where $z$ is from a neighborhood of zero in $\mathbb C$, $\Psi$ and $\Phi$ are analytic functions in a neighborhood of zero such that $\Phi(0)=\Psi(0)=\Psi'(0)=0$. This assumption  automatically implies that
\begin{equation}\label{jhgfuy}\Phi(z)=-C(\Psi(z)),\end{equation}
where
$C(z)$ is the cumulant generating function of the measure of orthogonality, $\mu$:
$$C(z)=\sum_{n=1}^\infty\frac{z^n}{n!}\,C^{(n)},$$
$C^{(n)}$ being the $n$-th cumulant of $\mu$. Recall that
$$C(z)=\log\left(\int_{\mathbb R}e^{zs}\,\mu(ds)\right).$$
Each system of Meixner polynomials is characterized by three parameters $k>0$, $\lambda\in\mathbb R$, and $\eta\ge0$. The corresponding orthogonal polynomials satisfy the recursion relation
\begin{equation}\label{fytftdr} xP^{(n)}(x)=P^{(n+1)}(x)+\lambda nP^{(n)}(x)+(kn+\eta n(n-1))P^{(n-1)}(x),\end{equation}
and  the  generating function  takes the form
\begin{equation}\label{drtyd} G(z,x)=\exp\big(x\Psi_{\lambda,\eta}(z)-kC_{\lambda,\eta}(\Psi_{\lambda,\eta}(z))\big),\end{equation} where the functions $\Psi_{\lambda,\eta}(z)$ and $C_{\lambda,\eta}(z)$ are determined by the parameters $\lambda$ and $\eta$ only. In particular, $C_{\lambda,\eta}(z)$ is the cumulant generating function of the measure of orthogonality corresponding to the parameters $k=1$, $\lambda$ and $\eta$.  In fact, for fixed $\lambda$ and $\eta$, the change of parameter $k$ means a dilation of the underlying space $\mathbb R$. We refer the reader to e.g.\ \cite{Meixner} for an explicit form of  $\Psi_{\lambda,\eta}(z)$ and $C_{\lambda,\eta}(z)$. We also note that these functions continuously depend on their parameters $\lambda$ and $\eta$, see \cite{Rod} for details.

Let us now  outline the infinite dimensional case, see \cite{Ly1,Ly2,Rod} for further details.
Let $T$ be a complete, connected, oriented $C^\infty$
Riemannian manifold and let ${\cal B}(T)$ be the Borel
$\sigma$-algebra on $T$. Let $\sigma$ be a
Radon, non-atomic, non-degenerate measure on $(T,\mathcal B(T))$. (For simplicity, the reader may think of $T$ as $\mathbb R^d$ and of $\sigma$ as the Lebesgue measure). Let $\mathcal D$ denote the space of all real-valued infinitely differentiable functions on $T$ with compact support. We endow $\mathcal D$ with the standard nuclear space topology. Let $\mathcal D'$ denote the dual space of $\mathcal D$ with respect to the center space $L^2(T,\sigma)$.
Thus, $\mathcal D'$ consists of generalized functions (distributions) on $T$. Let $\mathscr C$ denote the cylinder $\sigma$-algebra on $\mathcal D'$, i.e., the minimal $\sigma$-algebra on $\mathcal D'$ with respect to which, for any $\xi\in\mathcal D$, the  mapping $\mathcal D'\ni\omega\mapsto\langle \omega,\xi\rangle\in\mathbb R$  is Borel-measurable.  Here and below, $\langle \cdot,\cdot\rangle$ denotes the  pairing between elements of a given linear topological space and its dual space.

Let $\mu$ be a probability measure on $(\mathcal D',\mathscr C)$ (a generalized stochastic process). The cumulant generating function of $\mu$ is given by
$$ C(\xi)=\log\left(\int_{\mathcal D'}e^{\langle \omega,\xi\rangle}\mu(d\omega)\right),\quad \xi\in\mathcal D.$$
The  Meixner class of generalized stochastic processes with independent values may be identified as follows.
We fix arbitrary smooth functions $\lambda:T\to\mathbb R$ and $\eta:T\to[0,\infty)$, and define a probability measure $\mu$ on $(\mathcal D',\mathcal C)$   whose cumulant generating function is
$$C (\xi)=\int_T C_{\lambda(t),\eta(t)}(\xi(t))\,\sigma(dt), \quad \xi\in\mathcal D.
$$ Here, $C_{\lambda(t),\eta(t)}(\cdot)$ is as in \eqref{drtyd}.

Consider the set of all continuous polynomials on $\mathcal D'$, i.e., functions on $\mathcal D'$ which have  the form
$$F(\omega)=\sum_{i=0}^n\langle \omega^{\otimes i},f^{(i)}\rangle,\quad n\in\mathbb N_0.$$
 Here, for each $i$, $f^{(i)}$ belongs to the $i$-th symmetric tensor  power of $\mathcal D$, i.e., $f^{(i)}\in\mathcal D^{\odot i}$, where $\odot$ denotes symmetric tensor product. Note that $\mathcal D^{\odot i}$ consists of all smooth symmetric functions on $T^i$ with compact support.

 For each $f^{(n)}\in\mathcal D^{\odot n}$, we denote by $ P(f^{(n)})=P(f^{(n)},\omega)$ the orthogonal projection of the monomial $\langle \omega^{\otimes n},f^{(n)}\rangle$ onto the $n$-th chaos, i.e., onto the orthogonal difference in $L^2(\mathcal D',\mu)$ of the closures of the polynomials of order $\le n$ and of order $\le n-1$,  respectively. Then
$P(f^{(n)})$ is a continuous polynomial.
By construction, for any $f^{(n)}\in\mathcal D^{\odot n}$ and $g^{(m)}\in\mathcal D^{\odot m}$ with  $n\ne m$, the polynomials $P(f^{(n)})$ and $P(g^{(m)})$  are orthogonal.
Furthermore, for each $\omega\in\mathcal D'$, one can recursively define $P^{(n)}(\omega)\in\mathcal D^{\prime\,\odot n}$, $n\in\mathbb N$, so that
$$ P(f^{(n)},\omega)=\langle P^{(n)}(\omega),f^{(n)}\rangle.$$
The (exponential) generating function of these  polynomials is defined  by
$$  G(\xi,\omega):=\sum_{n=0}^\infty\frac1{n!}\,
\langle P^{(n)}(\omega),\xi^{\otimes n}\rangle
,$$
where $\xi$ is from a neighborhood of zero in $\mathcal D$. We have:
\begin{equation}\label{uyfrutf}   G(\xi,\omega)=\exp\left[
\big\langle \omega(\cdot),\Psi_{\lambda(\cdot),\eta(\cdot)}(\xi(\cdot))\big\rangle-\int_T C_{\lambda(t),\eta(t)}
(\Psi_{\lambda(t),\eta(t)}(\xi(t)))\,\sigma(dt)
\right], \end{equation}
compare with \eqref{drtyd}. Note that the measure $\sigma$ now plays the role of the parameter $k$ in \eqref{drtyd}.

Below, in the fee case, we will use, for many objects,  the same notations as those used for their counterpart in the classical case. However, it should always be clear from the context which objects are meant.

 Introduced by Anshelevich \cite{a1} and Saitoh, Yoshida \cite{SY}, the free Meixner class of orthogonal polynomials on $\mathbb R$  consists of all monic
orthogonal polynomials $(P^{(n)})_{n=0}^\infty$ on $\mathbb R$    whose (usual) generating function has the form
$$G(z,x):=\sum_{n=0}^\infty P^{(n)}(x)z^n=(1-x\Psi(z)-\Phi(z)))^{-1}=\sum_{k=0}^\infty (x\Psi(z)+\Phi(z))^k,$$
where $z$ is from a neighborhood of zero and $\Phi$ and $\Psi$ satisfy the same conditions as in the classical case. Then the function $\Phi(z)$ automatically takes the form as in \eqref{jhgfuy},
but with
$C(\cdot)$ being the free cumulant generating function of the measure of orthogonality, $\mu$:
$$ C(z):=\sum_{n=1}^\infty z^n C^{(n)},$$
where $C^{(n)}$ is the $n$-th free cumulant of $\mu$, see \cite{a1,a5}.

A system of such  polynomials is also characterized by three parameters $k>0$, $\lambda\in\mathbb R$, $\eta\ge0$
and the polynomials satisfy the recursion relation as in \eqref{fytftdr} but with the factors $n$ and $n-1$ being replaced by $[n]_0$ and $[n-1]_0$,  respectively. Here,
for $q\in\mathbb R$ and $n=0,1,2\dots$, we denote $[n]_q:=(1-q^n)/(1-q)$ and so $[n]_0=0$ for $n=0$ and $=1$ for all $n=1,2,\dots$ Thus, 
\begin{align*}
&P^{(0)}(x)=1,\quad P^{(1)}(x)=x,\\
&xP^{(1)}(x)=P^{(2)}(x)+\lambda P^{(1)}(x)+kP^{(0)}(x),\\
&xP^{(n)}(x)=P^{(n+1)}(x)+\lambda P^{(n)}(x)+(k+\eta)P^{(n-1)}(x),\quad n\ge2.
\end{align*}
Furthermore, the generating function $G(z,x)$ takes
the form as in \eqref{drtyd} but with the the resolvent function  replacing the exponential function. In fact, we have \cite{a1}
\begin{equation}\label{kjfdry}
\Psi_{\lambda,\eta}(z)=\frac{z}{1+\lambda z+\eta z^2},\quad C_{\lambda,\eta}(\Psi_{\lambda,\eta}(z))=\frac{z^2}{1+\lambda z+\eta z^2},
\end{equation}
so that
\begin{equation}\label{rtysdtr}
G(z,x)=\left(
1-x\,\frac{z}{1+\lambda z+\eta z^2}+k\,\frac{z^2}{1+\lambda z+\eta z^2}
\right)^{-1}.
\end{equation}
We also note that the class of orthogonal polynomials which is now called the free Meixner class, was derived in the conditionally free central limit theorem and in the conditionally free Poisson limit theorem in \cite{BLS}.

In \cite{a5} (see also \cite{a2}), Anshelevich introduced and studied multivariate orthogonal polynomials of  non-commuting
variables  
with a resolvent-type generating function.
He, in particular, noticed that  the generating function $G(z,x)$ should be defined
for non-commuting indeterminates $(z_1,\dots,z_k)=z$
(which form coefficients  by the orthogonal polynomials) and non-commuting indeterminates $(x_1,\dots,x_k)=x$ (which are variables of the polynomials), and the $z_i$-variables must commute with the $x_j$-variables for all $i,j=1,\dots,k$.
The generating function is then supposed to have the form
\begin{equation}\label{tyder} G(z,x)=\left(1-
\sum_{i=1}^k x_i\Psi_i(z)-\Phi(z)
\right)^{-1}.\end{equation}
We refer to \cite{a2,a5} for an extension of formula \eqref{jhgfuy} to the multivariate case,  for a   recursion relation satisfied by the corresponding orthogonal polynomials,  for an operator model of these polynomials, and for further related results.

In part 1 of this paper, \cite{bl}, we identified the
Meixner class of  non-commutative generalized stochastic processes $\omega=(\omega(t))_{t\in T}$  as those
\begin{itemize}
\item[a)] which have free independent values;
\item[b)] whose  orthogonal polynomials are continuous in $\omega$.
\end{itemize}
The main aim of the present paper is to derive the generating function for a system of orthogonal polynomials as in b).
However, when discussing a generating function for a system of  polynomials of infinitely many non-commuting variables, the first question we have to answer is: What should serve as a generating function? Developing the idea of \cite{a5}, we  will proceed in this paper   as follows.

Think informally of each  polynomial of $\omega$
as
$\langle P^{(n)}(\omega)
,f^{(n)}\rangle$, where $P^{(n)}(\omega)
$ is an operator-valued distribution on $T^n$ and $f^{(n)}$ is a test function on $T^n$.
We will consider a class of test operator-valued functions on $T$, denoted by $\mathcal Z(T)$.  We assume that, for each $Z\in\mathcal Z(T)$ and $t\in T$, the operator $Z(t)$ commutes with each  polynomial $\langle P^{(n)}(\omega)
,f^{(n)}\rangle$. (However, for $s,t\in T$, $Z(s)$ and $Z(t)$ do not need to commute.) In Section~\ref{itiutr68}, we  give a rigorous meaning to a `dual pairing' $\langle P^{(n)}(\omega),Z^{\circledast n}\rangle$ and define a generating function
$$ G(Z,\omega)=\sum_{n=0}^\infty\langle P^{(n)}(\omega),Z^{\circledast n}\rangle,\quad Z\in\mathcal Z(T).$$
Here $Z^{\circledast n}(t_1,\dots,t_n):=Z(t_1)\dotsm Z(t_n)$ for $(t_1,\dots,t_n)\in T^n$.
We also show  that the generating function $ G(Z,\omega)$ uniquely characterizes the corresponding system of polynomials.   In Section~\ref{fyfr}, we  prove that the generating function of the  Meixner system  is given by
\begin{align} G(Z,\omega)&=\left(\mathbf 1-
\big\langle \omega(\cdot),\Psi_{\lambda(\cdot),\eta(\cdot)}(Z(\cdot))\big\rangle+\int_T C_{\lambda(t),\eta(t)}
(\Psi_{\lambda(t),\eta(t)}(Z(t)))\,\sigma(dt)
\right)^{-1}
\notag\\
&=\left(
\mathbf 1-\left\langle\omega,\,\frac{Z}{\mathbf 1+\lambda Z+\eta Z^2}\right\rangle+\int_T \frac{Z(t)^2}{\mathbf 1+\lambda(t)Z(t)+\eta(t)Z(t)^2}\,\sigma(dt)
\right)^{-1},
\label{ftydr} \end{align}
where $\omega$ is Meixner's non-commutative generalized
stochastic processes with freely independent values corresponding to functions $\lambda$ and $\eta$.  The reader is advised to compare  formula \eqref{ftydr}   with the generating function  in the classical infinite dimensional case, formula \eqref{uyfrutf}, and with the generating function in the finite-dimensional free case, formulas \eqref{rtysdtr} and \eqref{tyder}.

Finally, in Section~\ref{jfdrt}, we discuss a related problem of the  action of the annihilation operator at point $t\in T$, denoted by $\di_t$ in \cite{bl}. Recall that, in the classical infinite-dimensional case, the annihilation operator $\di_t$ can be represented as an analytic function of the Hida--Malliavin derivative $D_t$, more precisely $\di_t=\Psi_{\lambda(t),\eta(t)}^{-1}(D_t)$.
(Recall that $D_t$ is the derivative in the direction of the delta-function $\delta_t$.)
We discuss a free counterpart of this result in  free Gauss--Poisson case, i.e., when $\eta\equiv0$. A striking difference to the classical case is that we represent $\di_t$ not just as a function of   the free derivative $D_t$ in the direction $\delta_t$ (this being impossible), but rather as a function of an operator $D_t\mathbb G$.
More precisely, we show that $\di_t=\Psi_{\lambda(t),0}^{-1}(D_t\mathbb G)$.
 Here $\mathbb G$ is a `global' operator, which is independent of $t$.
In fact,  $\mathbb G$ is a sum of certain integrals of $D_s$ over the whole space $T$.
  It should be stressed that this result is genuinely infinite-dimensional, since in one dimension we lose the notion of `globality'.
We expect that a similar result should also hold in the general case, not necessarily when $\eta\equiv0$, and we hope to return to this problem in our future research.

\section{Generating function: construction and uniqueness of corresponding polynomials}\label{itiutr68}

Just as in \cite{bl}, we will assume that $T$ is a locally compact Polish space.
We denote by $\mathcal B(T)$ the Borel $\sigma$-algebra on $T$, and by  $\mathcal B_0(T)$ the collection of all relatively compact sets from $\mathcal B(T)$. For any fixed $A\in\mathcal B_0(T)$, we will denote by $\mathcal B(A)$ the trace $\sigma$-algebra of $\mathcal B(T)$ on $A$, i.e., $\{B\in\mathcal B(T)\mid B\subset A\}$.

\subsection{Construction of integral of an operator-valued function with respect to an operator-valued measure}\label{tyrdert6}

Let $\mathcal G$ be a real separable Hilbert space, and let $\mathscr L(\mathcal G)$ denote the Banach space of all bounded linear operators in $\mathcal G$. We will call a mapping
$Z:T\to\mathscr L(\mathcal G)$
simple if it has a form
\begin{equation}\label{yfrtuyrcd} Z(t)=\sum_{i=1}^n Z_i\chi_{\Delta_i}(t),\end{equation}
where $Z_1,\dots,Z_n\in\mathscr L(\mathcal G)$,  $\Delta_1,\dots,\Delta_n\in\mathcal B_0(T)$, $n\in\mathbb N$, and $\chi_{\Delta_i}(t)$ denotes the indicator function of the set $\Delta_i$. We  denote by $\mathcal Z(T)$ the set of all mappings $Z:T\to\mathscr L(\mathcal G)$  such that there exists a set $A\in\mathcal B_0(T)$ and a sequence of simple mappings $\{Z_n\}_{n=1}^\infty$ which vanish outside $A$ and satisfy
\begin{equation}\label{dtrsw}\sup_{t\in T}\|Z(t)-Z_n(t)\|_{\mathscr L(\mathcal G)}\to0\quad\text{as }n\to\infty.\end{equation}
Clearly, $\mathcal Z(T)$ is a normed vector space equipped with the norm $$\|Z\|_\infty:=\sup_{t\in T}\|Z(t)\|_{\mathscr L(\mathcal G)}.$$
By construction, the set of all simple mappings forms a dense subspace in $\mathcal Z(T)$.

\begin{remark}
It can be easily shown that any mapping  $Z:T\to\mathscr L(\mathcal G)$ which is  continuous and which vanishes outside a compact set in $T$, belongs to $\mathcal Z(T)$.
\end{remark}

Let $\mathcal H$ be another real, separable Hilbert space.  We consider a mapping
$$\mathcal B_0(T)\ni\Delta\mapsto M(\Delta)\in \mathscr L(\mathcal H).$$
We assume:

\begin{enumerate}

\item[(A1)]  $M(\varnothing)=\mathbf 0$.

\item[(A2)] $M(\cdot)$ admits a decomposition
$$M(\Delta)=U(\Delta)+V(\Delta),\quad \Delta\in\mathcal B_0(T),$$
with $U(\Delta), V(\Delta)\in\mathscr L(\mathcal H)$ being such  that, for any mutually disjoint sets $\Delta_1,\Delta_2\in\mathcal B_0(T)$, we have
$$ \operatorname{Ran} U(\Delta_1)\perp \operatorname{Ran} U(\Delta_2),\quad \operatorname{Ran}V(\Delta_1)^*\perp \operatorname{Ran}V(\Delta_2)^*,$$ where
$\operatorname{Ran} A$ denotes the range of a bounded linear operator $A$, and
the symbol $\perp$ refers to orthogonality in $\mathcal H$.

\item[(A3)] For any $A\in\mathcal B_0(T)$,  any sequence of mutually disjoint sets $\Delta_n\in\mathcal B(A)$, $n\in\mathbb N$, and any $F\in\mathcal H$,
$$U\left(\bigcup_{n=1}^\infty\Delta_n\right)F=\sum_{n=1}^\infty U(\Delta_n)F,\quad V\left(\bigcup_{n=1}^\infty\Delta_n\right)^*F=\sum_{n=1}^\infty V(\Delta_n)^*F,$$
where the series converges in $\mathcal H$.
\end{enumerate}

\begin{remark}The reader will see below that assumptions (A1)--(A3) are sufficient for our purposes. 

\end{remark}

For each $Z\in\mathcal Z(T)$, we will now  identify an integral $\int_T Z\otimes dM$ as 
a bounded linear operator in the Hilbert space  $\mathcal G\otimes\mathcal H$.
We fix any $A\in\mathcal B_0(T)$.
Let $Z$ be a simple mapping as in \eqref{yfrtuyrcd} such that $\Delta_i\subset A$ for all $i=1,\dots,n$. Without loss of generality, we may assume that the sets $\Delta_1,\dots,\Delta_n$ are mutually disjoint.
We  define
$$\int_T Z\otimes dU:=\sum_{i=1}^n Z_i\otimes U(\Delta_i)\in\mathscr L(\mathcal G\otimes\mathcal H).$$
 By (A2),
$$\operatorname{Ran}(Z_i\otimes U(\Delta_i))\perp \operatorname{Ran}(Z_j\otimes U(\Delta_j)), \quad i\ne j,$$
where $\perp$ refers to orthogonality in $\mathcal G\otimes\mathcal H$. Hence, for each $F\in\mathcal G\otimes\mathcal H$,
\begin{align}
\left\|\int_T Z\otimes dU\,F\right\|_{\mathcal G\otimes\mathcal H}^2
&= \sum_{i=1}^n \|Z_i\otimes  U(\Delta_i)F\|_{\mathcal G\otimes\mathcal H}^2\notag\\
&\le \sum_{i=1}^n \|Z_i\|_{\mathscr L(\mathcal G)}^2\|\mathbf 1\otimes U(\Delta_i)F\|^2_{\mathcal G\otimes\mathcal H}\notag\\
&\le \left(\max_{i=1,\dots,n}\|Z_i\|_{\mathscr L(\mathcal G)}^2\right)\sum_{i=1}^n\|\mathbf 1\otimes U(\Delta_i)F\|^2_{\mathcal G\otimes\mathcal H}\notag\\
&=\|Z\|_\infty^2\left\|\mathbf 1\otimes U\left(\bigcup_{i=1}^n\Delta_i\right)F\right\|^2_{\mathcal G\otimes\mathcal H}\notag\\
&\le \|Z\|_\infty^2 \| \mathbf 1\otimes U(A)F\|^2_{\mathcal G\otimes\mathcal H}.\label{trstr}
\end{align}
Note that the latter estimate follows from the inequality
$$\|\mathbf 1\otimes U(A_1)F\|_{\mathcal G\otimes\mathcal H}\le \|\mathbf 1\otimes U(A_2)F\|_{\mathcal G\otimes\mathcal H},\quad A_1,A_2\in\mathcal B_0(T),\ A_1\subset A_2,$$
which, in turn, is a consequence of (A2) and (A3).
Hence, by \eqref{trstr},
\begin{equation}\label{ffuftyu}\left\|\int_T Z\otimes dU\right\|_{\mathscr L(\mathcal G\otimes\mathcal H)}\le \|Z\|_\infty \| U(A)\|_{\mathscr L(\mathcal H)}.
\end{equation}

Let now $Z$ be an arbitrary element of $\mathcal Z(T)$, and let $\{Z_n\}_{n=1}^\infty$ be an approximating sequence of simple mappings as in the definition of $\mathcal Z(T)$. By \eqref{ffuftyu}, for any $m,n\in\mathbb N$,
\begin{align*}
\left\|\int_T Z_n\otimes dU-\int_T Z_m\otimes dU\right\|_{\mathscr L(\mathcal G\otimes\mathcal H)}&=\left\|\int_T (Z_n-Z_m)\otimes dU\right\|_{\mathscr L(\mathcal G\otimes\mathcal H)}\\
&\le \|Z_n-Z_m\|_\infty \| U(A)\|_{\mathscr L(\mathcal H)}.
\end{align*}
Hence, $\left\{\int_T Z_n\otimes dU\right\}_{n=1}^\infty$ is Cauchy sequence in $\mathscr L(\mathcal G\otimes\mathcal H)$, and so it has a limit, which we denote by $\int_T Z\otimes dU$. Clearly, the definition of  $\int_T Z\otimes dU$ does not depend on the choice of approximating sequence of simple mappings.

Note that, if $Z(\cdot)$ belongs to $\mathcal Z(T)$, then also $Z(\cdot)^*$ belongs to $\mathcal Z(T)$.
We can therefore define, for each $Z\in\mathcal Z(T)$,
\begin{equation}\label{dtrste}\int_T Z\otimes dV:=\left(\int_T Z^*\otimes dV^*\right)^*.\end{equation}
Finally, we set
$$\int_T Z\otimes dM:=\int_T Z\otimes dU+\int_T Z\otimes dV.$$
By \eqref{ffuftyu} and \eqref{dtrste},
\begin{equation}\label{hcgdxgtrjdx}
\left\|\int_T Z\otimes dM\right\|_{\mathscr L(\mathcal G\otimes\mathcal H)}\le \|Z\|_\infty \left(\| U(A)\|_{\mathscr L(\mathcal H)}+\| V(A)\|_{\mathscr L(\mathcal H)}\right).
\end{equation}
Thus, we have proved

\begin{proposition}\label{igtuy}
Let $M$ satisfy \rom{(A1)--(A3)}. Then, for each $A\in\mathcal B_0(T)$, there exits a constant $C_1(A)\ge0$, such that, for each $Z\in\mathcal Z(T)$ satisfying $Z(t)=0$ for all $t\not\in A$, we have
$$ \left\|\int_T Z\otimes dM\right\|_{\mathscr L(\mathcal G\otimes\mathcal H)}\le C_1(A)\|Z\|_\infty.$$
\end{proposition}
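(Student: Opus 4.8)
The plan is to observe that the desired estimate has already been essentially derived in the preceding discussion; the proposition merely packages the accumulated bound \eqref{hcgdxgtrjdx} into a clean statement. So the proof will be very short, and the real content is simply identifying the constant. First I would recall that the entire construction leading up to the statement proceeds by first defining $\int_T Z\otimes dU$ on simple mappings, establishing the key inequality \eqref{ffuftyu}, and then extending by continuity. The crucial point is that \eqref{ffuftyu} already gives the operator-norm bound $\|Z\|_\infty\,\|U(A)\|_{\mathscr L(\mathcal H)}$ for the $U$-part, valid for simple $Z$ supported in $A$, and that this bound passes to the limit under the approximation \eqref{dtrsw} because the approximating simple mappings $Z_n$ may be taken to vanish outside $A$ as well.

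Next I would handle the $V$-part. Here the key observation is that taking adjoints is an isometry of $\mathscr L(\mathcal G\otimes\mathcal H)$, so the definition \eqref{dtrste} immediately yields
\begin{equation*}
\left\|\int_T Z\otimes dV\right\|_{\mathscr L(\mathcal G\otimes\mathcal H)}=\left\|\int_T Z^*\otimes dV^*\right\|_{\mathscr L(\mathcal G\otimes\mathcal H)}\le\|Z^*\|_\infty\,\|V(A)^*\|_{\mathscr L(\mathcal H)}=\|Z\|_\infty\,\|V(A)\|_{\mathscr L(\mathcal H)},
\end{equation*}
using that $\|Z^*\|_\infty=\|Z\|_\infty$ and $\|V(A)^*\|=\|V(A)\|$. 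Adding the two estimates via the triangle inequality for the operator norm produces exactly \eqref{hcgdxgtrjdx}. Consequently one sets $C_1(A):=\|U(A)\|_{\mathscr L(\mathcal H)}+\|V(A)\|_{\mathscr L(\mathcal H)}$, which is a nonnegative constant depending only on $A$ (through the fixed decomposition $M=U+V$ from (A2)), and the claimed bound follows.

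Since the decomposition $M=U+V$ supplied by (A2) is fixed once and for all, the constant $C_1(A)$ is well defined; the statement does not assert uniqueness of $C_1(A)$, so no further argument about the decomposition is needed. I do not anticipate any genuine obstacle here: the proposition is a corollary of the displayed estimate \eqref{hcgdxgtrjdx}, and the only thing to verify carefully is that the approximating sequence $\{Z_n\}$ in \eqref{dtrsw} can be chosen supported in $A$ (which is guaranteed by the very definition of $\mathcal Z(T)$, since the $Z_n$ vanish outside some fixed $A\in\mathcal B_0(T)$) so that the bound \eqref{ffuftyu} applies uniformly along the sequence and survives the limit. The mildly delicate point, if any, is the passage to the limit for the $V$-integral: one must confirm that the Cauchy property and hence continuity of the map $Z\mapsto\int_T Z\otimes dV$ follows from that of $Z\mapsto\int_T Z\otimes dU$ through the adjoint operation, which again is immediate since adjunction is an isometric anti-isomorphism of $\mathscr L(\mathcal G\otimes\mathcal H)$.
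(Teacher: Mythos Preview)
Your proposal is correct and follows essentially the same approach as the paper: the proposition is stated immediately after inequality \eqref{hcgdxgtrjdx} with the phrase ``Thus, we have proved,'' so its proof is precisely the preceding construction and estimates that you have recapitulated, with $C_1(A)=\|U(A)\|_{\mathscr L(\mathcal H)}+\|V(A)\|_{\mathscr L(\mathcal H)}$.
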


\begin{remark}
The reader is advised to compare our construction of $\int_T Z\otimes dM$ with constructions of  operator-valued integrals available in \cite{BS,HP,parthasarathy}
\end{remark}

Let us consider the special case where $\mathcal G=\mathbb R$, and so $\mathscr L (\mathcal G)=\mathbb R$. As easily seen, the set $\mathcal Z(T)$ is now the space $B_0(T)$ of all bounded measurable functions $f:T\to \mathbb R$ with compact support. Furthermore, for each $f\in B_0(T)$, the operator $\int_T f\,dM:=\int_T f\otimes dM\in\mathscr L(\mathcal H)$  is characterized by the formula
\begin{equation}\label{xdfszers}\left(\int_Tf\,dM\, F_1,F_2\right)_{\mathcal H}:=
\int_Tf\, dM_{F_1,F_2},\quad F_1,F_2\in\mathcal H.\end{equation}
Here, for any $A\in \mathcal B_0(T)$ and any $F_1,F_2\in\mathcal H$, the mapping
$$\mathcal B(A)\ni\Delta\mapsto M_{F_1,F_2}(\Delta):=(M(\Delta)F_1,F_2)_{\mathcal H}\in\mathbb R$$
is a signed measure on $(A,\mathcal B(A))$. By Proposition~\ref{igtuy}, the total variation of   $M_{F_1,F_2}$ on $A$
satisfies
\begin{equation}\label{tydy}|M_{F_1,F_2}|(A)\le C_1(A)\|F_1\|_{\mathcal H}\,\|F_2\|_{\mathcal H}.\end{equation}

\begin{remark} Assume that $T=\mathbb R$ and $M(\cdot)$ is an orthogonal resolution of the identity in $\mathcal H$, i.e., a projection-valued measure on $(\mathbb R,\mathcal B(\mathbb R))$. Then $M(\cdot)$
clearly satisfies the above assumptions and
$\int_{\mathbb R}f\,dM$ is a usual spectral integral (see e.g.\ \cite{BUS,Yosida}).
\end{remark}

\subsection{Generating function uniquely identifies polynomials}\label{fstrs}

We will now consider a sequence $(M^{(n)})_{n=1}^\infty$ of operator-valued measures on $\mathcal B_0(T^n)$, respectively. Our initial assumptions on each  $M^{(n)}$ will be slightly weaker than those in subsec.~\ref{tyrdert6}.

We assume that, for each $n\in\mathbb N$, we are given a function
$$\mathcal B_0(T^n)\ni\Delta\mapsto M^{(n)}(\Delta)\in\mathscr L(\mathcal H)$$
which satisfies the following assumption:

\begin{enumerate}

\item[(B)] For any $F_1,F_2\in\mathcal H$ and  any $A\in\mathcal B_0(T)$, the mapping
$$\mathcal B_0(A^n)\ni\Delta\mapsto M^{(n)}_{F_1,F_2}(\Delta):=(M^{(n)}(\Delta)F_1,F_2)_{\mathcal H}\in\mathbb R$$
is a signed measure on $(A^n,\mathcal B(A^n))$ whose total variation on $A^n$ satisfies
\begin{equation}\label{tdtyuk}|M^{(n)}_{F_1,F_2}|(A^n)\le C_2(A)^n\,\|F_1\|_{\mathcal H}\,
\|F_2\|_{\mathcal H},\end{equation}
where the constant $C_2(A)$ only depends on $A$, and is independent of $F_1,F_2\in\mathcal H$  and $n\in\mathbb N$.
\end{enumerate}

Analogously to \eqref{xdfszers},  we may then identify, for each $f^{(n)}\in B_0(T^n)$, the integral
$\int_{T^n}f^{(n)}\,dM^{(n)}$ an an element of $\mathscr L(\mathcal H)$. (This operator may be though of as a polynomial  of the $n$-th order.)

For any $Z_1,\dots,Z_n\in\mathcal Z(T)$, we define
$$ (Z_1\circledast Z_2\circledast\dots \circledast Z_n)(t_1,t_2,\dots,t_n):=Z_1(t_1)Z_2(t_2)\dotsm Z_n(t_m),$$
where the right hand side is understood in the sense of the usual product of operators.
Note that, in the case where $\mathcal G=\mathbb R$, for any $f_1,f_2\dots,f_n\in\mathcal Z(T)=B_0(T)$, we evidently have
$$f_1\otimes f_2\otimes\dots\otimes f_n=f_1\circledast f_2\circledast\dots\circledast f_n.$$

For each $Z\in\mathcal Z(T)$, we would like to identify an integral $\int_{T^n} Z^{\circledast n}\otimes d M^{(n)}$
as an element of $\mathscr L(\mathcal G\otimes\mathcal H)$. 
However, we cannot do this under the above assumptions, so we define a four-linear form
\begin{align}
&\int_{T^n} Z^{\circledast n}\otimes dM^{(n)}\,(G_1,F_1,G_2,F_2)\notag\\
&\quad:=\int_{T^n} (Z(t_1)\dotsm Z(t_n)G_1,G_2)_{\mathcal G}\,dM^{(n)}_{F_1,F_2}(t_1,\dots,t_n),\quad G_1,G_2\in\mathcal G,\ F_1,F_2\in\mathcal H.\label{ydyd}
\end{align}
As easily follows from the definition of $\mathcal Z(T)$ and (B), the function
 $$ T^n\ni(t_1,\dots,t_n)\to  (Z(t_1)\dotsm Z(t_n)G_1,G_2)_{\mathcal G}\in\mathbb R$$
 is indeed measurable, the integral in  \eqref{ydyd} is finite, and moreover,
  \begin{equation}\label{tfydy}\left|\int_{T^n} Z^{\circledast n}\otimes dM^{(n)}\,(G_1,F_1,G_2,F_2)\right|\le
  \|Z\|_\infty^n C_2(\operatorname{supp}Z)^n \|G_1\|_{\mathcal G}\,\|G_2\|_{\mathcal G}\,
 \|F_1\|_{\mathcal H}\,
\|F_2\|_{\mathcal H}.\end{equation}
 Here, $\operatorname{supp}Z$
 denotes the support of $Z$. 
 Hence, $\int_{T^n} Z^{\circledast n}\otimes dM^{(n)}$ is a bounded (and so continuous) form.

\begin{remark}\label{rtdtrd}
If there exists an operator $Q^{(n)}\in\mathscr L(\mathcal G\otimes\mathcal H)$ such that
$$ (Q^{(n)} G_1\otimes F_1,G_2\otimes F_2)_{\mathcal G\otimes\mathcal H}= \int_{T^n} Z^{\circledast n}\otimes dM^{(n)}\,(G_1,F_1,G_2,F_2),\quad G_1,G_2\in\mathcal G,\ F_1,F_2\in\mathcal H,$$
then we can identify $\int_{T^n} Z^{\circledast n}\otimes dM^{(n)}$ with the operator $Q^{(n)}$. However, the estimate \eqref{tfydy} is not sufficient for this to hold.

\end{remark}

We define a generating function of $(M^{(n)})_{n=1}^\infty$ as follows.
We set
$$\operatorname{Dom}( G):=\{Z\in\mathcal Z(T):\, \|Z\|_\infty C_2(\operatorname{supp}Z)<1\}. $$
Note that for each $Z\in\mathcal Z(T)$, one can find $\varepsilon>0$ such that, for each $a\in(-\varepsilon,\varepsilon)$, $aZ$ belongs to $\operatorname{Dom}( G)$.
By virtue of \eqref{tfydy}, for each $Z\in\operatorname{Dom}( G)$,
\begin{equation}
 G(Z):=\mathbf1+\sum_{n=1}^\infty \int_{T^n} Z^{\circledast n}\otimes dM^{(n)}
\end{equation}
defines a bounded four-linear form on $\mathcal G\times\mathcal H\times \mathcal G\times\mathcal H$.
Here, $\mathbf 1$ denotes the form which corresponds to the identity operator in $\mathcal G\otimes\mathcal H$.

\begin{remark}\label{sre} Just as in Remark~\ref{rtdtrd}, if there exists an operator $Q\in\mathscr L(\mathcal G\otimes\mathcal H)$ such that
$$(Q G_1\otimes F_1,G_2\otimes F_2)_{\mathcal G\otimes\mathcal H}=
 G(Z)(G_1,F_1,G_2,F_2),\quad G_1,G_2\in\mathcal G,\ F_1,F_2\in\mathcal H, $$then we can identify $ G(Z)$ with the operator $Q$.
\end{remark}

The following proposition shows that the generating function uniquely identifies the sequence $(M^{(n)})_{n=1}^\infty$.

\begin{proposition}\label{tyds}
Let $(M^{(n)})_{n=1}^\infty$ and $(\tilde M^{(n)})_{n=1}^\infty$  satisfy condition {\rm (B)}. Assume that
\begin{equation}\label{fytdfzwa}  G(Z)=\tilde G(Z),\quad Z\in\operatorname{Dom}( G)\cap \operatorname{Dom}(\tilde G).\end{equation}
 (Here, $\tilde G(Z)$ denotes the generating function of $(\tilde M^{(n)})_{n=1}^\infty$.)
 Then, for each $n\in\mathbb N$, $M^{(n)}=\tilde M^{(n)}$.
\end{proposition}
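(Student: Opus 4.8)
The plan is to show that equality of the two generating functions forces equality of each $M^{(n)}$ by extracting the individual terms of the series through a scaling argument, and then by a density argument in the test functions. First I would fix an arbitrary $Z\in\mathcal Z(T)$ together with arbitrary vectors $G_1,G_2\in\mathcal G$ and $F_1,F_2\in\mathcal H$, and consider the scalar function
\begin{equation}\label{scalarseries}
a\mapsto G(aZ)(G_1,F_1,G_2,F_2)=(G_1\otimes F_1,G_2\otimes F_2)_{\mathcal G\otimes\mathcal H}+\sum_{n=1}^\infty a^n\int_{T^n}Z^{\circledast n}\otimes dM^{(n)}\,(G_1,F_1,G_2,F_2),
\end{equation}
noting that $\int_{T^n}(aZ)^{\circledast n}\otimes dM^{(n)}=a^n\int_{T^n}Z^{\circledast n}\otimes dM^{(n)}$ by multilinearity. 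By the remark following the definition of $\operatorname{Dom}(G)$, there is $\varepsilon>0$ such that $aZ\in\operatorname{Dom}(G)\cap\operatorname{Dom}(\tilde G)$ for all $a\in(-\varepsilon,\varepsilon)$, and on this interval \eqref{scalarseries} is a genuine power series in $a$ (its convergence being guaranteed by the geometric bound \eqref{tfydy}). The hypothesis \eqref{fytdfzwa} says that this power series coincides with the corresponding one built from $\tilde M^{(n)}$ for every such $a$.

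Next I would invoke the uniqueness of power-series coefficients: two power series in the real variable $a$ that agree on a neighborhood of $0$ must have equal coefficients. Hence, for each fixed $n$ and each choice of $G_1,G_2,F_1,F_2$,
\begin{equation}\label{eqfourlinear}
\int_{T^n}Z^{\circledast n}\otimes dM^{(n)}\,(G_1,F_1,G_2,F_2)=\int_{T^n}Z^{\circledast n}\otimes d\tilde M^{(n)}\,(G_1,F_1,G_2,F_2).
\end{equation}
Since $Z$ and the four vectors were arbitrary, \eqref{eqfourlinear} holds for all of them. Now I would specialize to the scalar case of the construction, taking $\mathcal G=\mathbb R$ so that $\mathcal Z(T)=B_0(T)$ and $Z^{\circledast n}=f^{(n)}$ ranges over simple (indeed all bounded compactly supported) functions on $T^n$ of the product form $f_1\otimes\dots\otimes f_n$; with $G_1=G_2=1$ the left side of \eqref{eqfourlinear} reduces, by \eqref{ydyd}, to $\int_{T^n}f^{(n)}\,dM^{(n)}_{F_1,F_2}$. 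Thus
\begin{equation}\label{measuresagree}
\int_{T^n}f_1\otimes\dots\otimes f_n\,dM^{(n)}_{F_1,F_2}=\int_{T^n}f_1\otimes\dots\otimes f_n\,d\tilde M^{(n)}_{F_1,F_2}
\end{equation}
for all $f_1,\dots,f_n\in B_0(T)$ and all $F_1,F_2\in\mathcal H$.

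Finally I would upgrade \eqref{measuresagree} to equality of the signed measures $M^{(n)}_{F_1,F_2}$ and $\tilde M^{(n)}_{F_1,F_2}$, and thence to $M^{(n)}=\tilde M^{(n)}$. Working on a fixed $A\in\mathcal B_0(T)$, both are finite signed measures on $(A^n,\mathcal B(A^n))$ with total variation controlled by \eqref{tdtyuk}. The products of indicator functions $\chi_{B_1}\otimes\dots\otimes\chi_{B_n}$ with $B_i\in\mathcal B(A)$ are admissible test functions in \eqref{measuresagree}, so the two measures agree on the algebra of finite disjoint unions of measurable rectangles in $A^n$; since this algebra generates $\mathcal B(A^n)$ and both measures are finite, a standard monotone-class (or $\pi$--$\lambda$) argument, applied separately to the Hahn--Jordan components, yields $M^{(n)}_{F_1,F_2}=\tilde M^{(n)}_{F_1,F_2}$ on $\mathcal B(A^n)$. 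Letting $A$ exhaust $T$ gives agreement on all of $\mathcal B_0(T^n)$, and since $F_1,F_2\in\mathcal H$ were arbitrary, polarization of the bilinear form $(M^{(n)}(\Delta)F_1,F_2)_{\mathcal H}$ forces $M^{(n)}(\Delta)=\tilde M^{(n)}(\Delta)$ for every $\Delta$. The main obstacle I anticipate is purely bookkeeping rather than conceptual: one must be careful that the measure-theoretic uniqueness argument is applied to honest finite signed measures (so that the monotone-class theorem applies, which requires the finiteness guaranteed by \eqref{tdtyuk}) and that the passage from product test functions to arbitrary rectangles is handled before, not after, extending to the full $\sigma$-algebra.
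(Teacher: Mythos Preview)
Your scaling argument and extraction of coefficients is correct and matches the paper exactly, so you correctly arrive at
\[
\int_{T^n}Z^{\circledast n}\otimes dM^{(n)}=\int_{T^n}Z^{\circledast n}\otimes d\tilde M^{(n)}\quad\text{for all }Z\in\mathcal Z(T).
\]
The gap is in the next step. When you specialize to $\mathcal G=\mathbb R$ and $Z=f\in B_0(T)$, you only obtain $Z^{\circledast n}=f^{\otimes n}$, the $n$-th tensor \emph{power} of a single function; you do \emph{not} get arbitrary products $f_1\otimes\dots\otimes f_n$ with different $f_i$. Polarization in $f$ would only recover the symmetrization $\sum_{\sigma\in S_n}\int_{T^n}f_{\sigma(1)}\otimes\dots\otimes f_{\sigma(n)}\,dM^{(n)}_{F_1,F_2}$, and nothing in condition~(B) forces the signed measures $M^{(n)}_{F_1,F_2}$ to be symmetric under permutation of coordinates (indeed, in the paper's main application they are not). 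So from equality of the diagonals $\int f^{\otimes n}\,dM^{(n)}_{F_1,F_2}$ you cannot conclude $M^{(n)}_{F_1,F_2}=\tilde M^{(n)}_{F_1,F_2}$; you only get equality of their symmetrizations.

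This is exactly why the paper does \emph{not} take $\mathcal G=\mathbb R$, but instead chooses $\mathcal G$ to be the full Fock space $\mathcal F(\ell_2)$, takes $Z(t)=\sum_{i=1}^n a^+(e_i)\chi_{\Delta_i}(t)$ built from creation operators at orthonormal vectors $e_1,\dots,e_n$, and evaluates the four-linear form at $G_1=\Omega$, $G_2=e_1\otimes\dots\otimes e_n$. The non-commutativity of the creation operators, together with the orthonormality of the $e_i$, singles out the one ordered term $M^{(n)}_{F_1,F_2}(\Delta_1\times\dots\times\Delta_n)$ from the expansion of $Z^{\circledast n}$. This is the missing idea: you need a non-commutative $\mathcal G$ to separate the orderings. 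Once you have equality on arbitrary rectangles, your final monotone-class argument is fine.
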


\begin{proof} Let $Z\in\mathcal Z(T)$. Fix $\varepsilon>0$ such that, for each $a\in(-\varepsilon,\varepsilon)$, $aZ\in \operatorname{Dom}( G)\cap \operatorname{Dom}(\tilde G)$. Then, by \eqref{fytdfzwa}, for each $G_1,G_2\in\mathcal G$, $F_1,F_2\in\mathcal H$ and each $a\in(-\varepsilon,\varepsilon)$,
$$\sum_{n=1}^\infty a^n\int_{T^n}Z^{\circledast n}\otimes dM^{(n)}(G_1,F_1,G_2,F_2)=\sum_{n=1}^\infty a^n\int_{T^n}Z^{\circledast n}\otimes d\tilde M^{(n)}(G_1,F_1,G_2,F_2).$$
Hence, for each $n\in\mathbb N$,
$$\int_{T^n}Z^{\circledast n}\otimes dM^{(n)}=\int_{T^n}Z^{\circledast n}\otimes d\tilde M^{(n)},\quad Z\in\mathcal Z(T).$$

Now, take as Hilbert space $\mathcal G$  the full Fock space over $\ell_2$: $\mathcal G=\mathcal F(\ell_2)$. Fix $n\in\mathbb N$ and choose any mutually orthogonal vectors $e_1,\dots,e_n$ in $\ell_2$ with norm 1.
Fix arbitrary $\Delta_1,\dots,\Delta_n\in\mathcal B_0(T)$ and define $Z\in\mathcal Z(T)$ by
$$Z(t):=\sum_{i=1}^n a^+(e_i)\chi_{\Delta_i}(t),$$
$a^+(e_i)$ being the creation operator at $e_i$. Set $G_1:=\Omega$ --- the vacuum, and $G_2:=e_1\otimes e_2\otimes\dots\otimes e_n$. Then, for any $F_1,F_2,\in\mathcal H$,
\begin{align}
&\int_{T^n} Z^{\circledast n}\otimes dM^{(n)}(G_1,F_1,G_2,F_2)\notag\\
&\quad=\sum_{i_1,\,i_2,\,\dots,i_n=1,\dots, n}\left(e_{i_1}\otimes e_{i_2}\otimes\dots\otimes e_{i_n},e_1\otimes e_2\otimes\dots\otimes e_n\right)_{\mathcal F(\ell_2)}\notag\\
&\qquad\times
M^{(n)}_{F_1,F_2}(\Delta_{i_1}\times \Delta_{i_2}\times\dots\times \Delta_{i_n}) \notag\\
&\quad=M^{(n)}_{F_1,F_2}(\Delta_{1}\times \Delta_{2}\times\dots\times \Delta_{n}) .\notag
\end{align}
  Therefore,
 $$M^{(n)}_{F_1,F_2}(\Delta_{1}\times \Delta_{2}\times\dots\times \Delta_{n}) =
 \tilde M^{(n)}_{F_1,F_2}(\Delta_{1}\times \Delta_{2}\times\dots\times \Delta_{n}) .$$
 Hence, by (B), for any $\Delta\in\mathcal B_0(T^n)$,
 $$ M^{(n)}_{F_1,F_2}(\Delta)=  \tilde M^{(n)}_{F_1,F_2}(\Delta) ,$$
 which implies the proposition.
\end{proof}

\section{Generating function for a free Meixner process}\label{fyfr}

We start with a brief recalling of the construction of a free Meixner process from \cite{bl}. Let $T$ be as in Section~\ref{itiutr68}, and we denote $\mathcal D:=C_0(T)$. Let $\sigma$ be a Radon non-atomic measure on $(T,\mathcal B(T))$  which satisfies $\sigma(O)>0$ for each open, non-empty set $O$ in $T$.
Fix any functions $\lambda,\eta\in C(T)$, which play the role of parameters of the free Meixner process. Consider the extended Fock space
$$\mathbb F=\mathbb R\oplus \bigoplus_{n=1}^\infty L^2(T^n,\gamma_n).$$
Here, each measure $\gamma_n$ on $(T^n,\mathcal B(T^n))$ is defined as in \cite{bl} through the function
$\eta$. In particular, $\gamma_n=\sigma^{\otimes n}$ if and only if  $\eta\equiv 0$.

The free Meixner process is defined as the family $(X(f))_{f\in\mathcal D}$ of bounded linear operators in $\mathbb F$ given  by
$$ X(f)=X^+(f)+X^0(f)+X^-(f),$$
where the creation operator $X^+(f)$, the neutral operator $X^0(f)$ and the (extended) annihilation operator $X^-(f)$ are defined by formulas (4.1)--(4.3) in \cite{bl}. We also have a representation of each $X(f)$ as
$$ X(f)=\int_T \sigma(dt) f(t)\omega(t)=\langle \omega,f\rangle,$$
where
\begin{equation}\label{cfdjb} \omega(t)=\di_t^\dag+\lambda(t)\di_t^\dag\di_t+\di_t+\eta(t)\di_t^\dag\di_t\di_t\end{equation}
with $\di_t^\dag$ and $\di_t$ being the creation and annihilation operator at point $t$, respectively (see
\cite[Corollary~4.2]{bl}).

The corresponding system of orthogonal polynomials is denoted in this paper as $$\langle P^{(n)}(\omega),f^{(n)}\rangle,\quad  f^{(n)}\in \mathcal D^{(n)}:=C_0(T^n),\ n\in\mathbb N_0. $$
These are the bounded linear operators in $\mathbb F$ which are recursively defined through
\begin{gather*}
P^{(0)}(\omega)=\mathbf 1,\quad  P^{(1)}(\omega)(t)=\omega(t),\\
P^{(n)}(\omega)(t_1,\dots,t_n)=\omega(t_1)P^{(n-1)}(\omega)(t_2,\dots,t_n)-\delta(t_1,t_2)
\lambda(t_1)P^{(n-1)}(\omega)(t_2,\dots,t_n)\\
\text{}-\delta(t_1,t_2)P^{(n-2)}(\omega)(t_3,\dots,t_n)
-[n-2]_0\delta(t_1,t_2,t_3)\eta(t_1)P^{(n-2)}(\omega)(t_3,\dots,t_n),\quad n\ge2,
\end{gather*}
where $\delta(t_1,t_2)$ and $\delta(t_1,t_2,t_3)$ are the `delta-functions' defined as in \cite[Section~2]{bl}.
In particular, for any $f_1,\dots,f_n\in\mathcal D$, $n\ge2$,
\begin{gather}
\la P^{(n)}(\omega),f_1\otimes\dots\otimes f_n\ra= \la\omega,f_1\ra\, \la P^{(n-1)}(\omega),f_2\otimes\dots\otimes f_n\ra\notag \\ \text{}-\la P^{(n-1)}(\omega),(\lambda f_1 f_2)\otimes f_3\otimes\dots\otimes f_n\ra-\int_T f_1(t)f_2(t)\,\sigma(dt)\,\la P^{(n-2)}(\omega),f_3\otimes\dots\otimes f_n\ra\notag\\
\text{}-[n-2]_0\la P^{(n-2)}(\omega),(\eta f_1f_2f_3)\otimes  f_4\otimes\dots\otimes f_n\ra. \label{fdser}
\end{gather}
Recall also that we may extend the definition of $X(f)$ and of $\la P^{(n)}(\omega),f^{(n)}\ra$ to the case where $f\in B_0(T)$ and $f^{(n)}\in B_0(T^n)$, respectively.

Our aim now is to derive the generating function for  these orthogonal polynomials.
So, let us fix a Hilbert space $\mathcal G$. From now on, for simplicity of notation, we will sometimes identify operators $\mathscr X\in\mathscr L(\mathcal G)$ and $\mathscr Y\in\mathscr L (\mathbb F)$ with the operators $\mathscr X\otimes\mathbf 1$ and $\mathbf 1\otimes\mathscr Y$ in $\mathscr L(\mathcal G\otimes\mathbb F)$.

For each $f\in\mathcal D$, we clearly have $\la \omega,f\ra=\int_T f\,dM$, where for each $\Delta\in\mathcal B_0(T)$, $M
(\Delta):=X(\chi_\Delta)$.
Note that $M
$ satisfies conditions
(A1)--(A3) with $$U(\Delta)=X^+(\chi_\Delta)+X^0(\chi_\Delta),\quad V(\Delta)=X^-(\chi_\Delta).$$
Therefore, by subsec.~\ref{tyrdert6}, we define, for each $Z\in\mathcal Z(T)$,
$$\la\omega,Z\ra:=\int_T Z\otimes dM
\in\mathscr L(\mathcal G\otimes\mathbb F).$$
It easily follows from \eqref{hcgdxgtrjdx} and the definition of the space $\mathbb F$ that
\begin{equation}\label{xsrzar}
\|\la\omega,Z\ra\|_{\mathscr L(\mathcal G\otimes\mathbb F)}\le \|Z\|_\infty C_3(\operatorname{supp}Z),\quad Z\in\mathcal Z,
\end{equation}
where
\begin{equation}\label{fxfx}
C_3(A):=2\sqrt{\sigma(A)}+2\sup_{t\in A}\eta(t)+\sup_{t\in A}|\lambda(t)|,\quad A\in\mathcal B_0(T).
\end{equation}

For any $n\in\mathbb N$ and any $Z_1,\dots,Z_n\in\mathcal Z(T)$, we  recurrently define an operator $\la P^{(n)}(\omega),Z_1\circledast\dots\circledast Z_n\ra$ from $\mathscr L(\mathcal G\otimes\mathbb F)$ as follows. By analogy with \eqref{fdser}, we set 
$\langle P^{(0)}(\omega),Z^{\circledast0}\rangle:=\mathbf 1$, 
$\langle P^{(1)}(\omega),Z\rangle:=\langle\omega,Z\rangle$ and for $n\ge2$
\begin{gather}
\la P^{(n)}(\omega),Z_1\circledast\dots\circledast Z_n\ra= \la\omega,Z_1\ra\, \la P^{(n-1)}(\omega),Z_2\circledast\dots\circledast Z_n\ra\notag \\ \text{}-\la P^{(n-1)}(\omega),(\lambda Z_1 Z_2)\circledast Z_3\otimes\dots\otimes Z_n\ra\notag\\
\text{}-
\int_T Z_1(t)Z_2(t)\,\sigma(dt)\,\la P^{(n-2)}(\omega),Z_3\circledast\dots\circledast Z_n\ra\notag\\
\text{}-[n-2]_0\la P^{(n-2)}(\omega),(\eta Z_1Z_2Z_3)\circledast  Z_4\circledast\dots\circledast Z_n\ra. \label{ufdycfrtd}
\end{gather}
Note that, for any $Z_1,Z_2\in\mathcal Z(T)$, the point-wise (non-comutative) product $Z_1Z_2$  belongs to $\mathcal Z(T)$, and for each $Z\in\mathcal Z(T)$, $\lambda Z$ and $\eta Z$ also belong to $\mathcal Z(T)$. In formula \eqref{ufdycfrtd} and below, for each $Z\in\mathcal Z(T)$, the integral
$\int_T Z(t)\,\sigma(dt)$ is understood in Bochner's sense, see e.g.\ \cite{BUS,Yosida}.

 It then easily follows by induction from \eqref{xsrzar}--\eqref{ufdycfrtd} and a standard estimate of the norm of a Bochner integral that, for any $A\in\mathcal B_0(T)$,  $n\in\mathbb N$, and any $Z_1,\dots,Z_n\in\mathcal Z(T)$ with support in $A$:
\begin{equation}\label{ydtr}
\|\la P^{(n)}(\omega),Z_1\circledast\dots\circledast Z_n\ra\|_{\mathscr L(\mathcal G\otimes\mathbb F)}\le C_4(A)^n\|Z_1\|_{\infty}\dotsm\|Z_n\|_\infty,
\end{equation}
where
\begin{align}C_4(A):&= C_3(A)+\sigma(A)+ \sup_{t\in A}|\lambda(t)|+\sup_{t\in A}\eta(t)\notag\\
&=2\sqrt{\sigma(A)}+\sigma(A)+2\sup_{t\in A}\eta(t)+3\sup_{t\in A}|\lambda(t)|
.\label{gzass} \end{align}
Hence, for each $Z\in\mathcal Z(T)$ such that $\|Z\|_\infty C_4(\operatorname{supp}Z)<1$, the sum
\begin{equation}\label{aw} G(Z)=\mathbf 1+\sum_{n=1}^\infty \la P^{(n)}(\omega),Z^{\circledast n}\ra\end{equation}
defines an operator from $\mathscr L(\mathcal G\otimes\mathbb F)$.

Next, we set, for each $n\in\mathbb N$ and $\Delta\in\mathcal B_0(T^n)$,
$$ M^{(n)}(\Delta):=\la P^{(n)}(\omega),\chi_\Delta\ra.$$
Analogously to  \eqref{ydtr}, we conclude that the sequence $(M^{(n)})_{n=1}^\infty$ satisfies condition (B), and so the function $ G$ defined by \eqref{aw} is the generating function of the operator-valued measures $(M^{(n)})_{n=1}^\infty$ in the sense of subsec.~\ref{fstrs}.
Hence, by Proposition~\ref{tyds}, the generating function $ G$ uniquely identifies $(M^{(n)})_{n=1}^\infty$, and hence also  polynomials $\la P^{(n)}(\omega),f^{(n)}\ra$, $f^{(n)}\in\mathcal D^{(n)}$.

To stress the dependence of the generating function $ G(Z)$ on the free generalized stochastic process $\omega$, we will  write $ G(Z,\omega)$.

\begin{theorem}\label{xdf}
Fix any  $A\in\mathcal B_0(T)$. Then there exits a constant $C_5(A)>0$ such that, for any $Z\in\mathcal Z(T)$ satisfying $\operatorname{supp} Z\subset A$ and $\|Z\|_\infty< C_5(A)$, formula \eqref{ftydr} holds. Furthermore, we have
\begin{equation}\label{ftsre}
 G(Z,\omega)=\left(\mathbf 1-f(Z)\big\la \omega(\cdot),\Psi_{\lambda(\cdot),\eta(\cdot)}(Z(\cdot))\big\ra\right)^{-1}f(Z),
\end{equation}
where
\begin{equation}\label{drt}f(Z):=\left(\mathbf 1+\int_T\frac{Z(t)^2}{1+\lambda(t)Z(t)+\eta(t)Z(t)^2}\,\sigma(dt)\right)^{-1}. \end{equation}
\end{theorem}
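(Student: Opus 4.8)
The plan is to convert the defining recursion \eqref{ufdycfrtd} into a closed functional equation for $G(Z,\omega)$ by introducing an auxiliary family of generating functions in which the first $\circledast$-slot carries a variable argument. For $Y\in\mathcal Z(T)$, set
$$H_Y(Z):=\sum_{n=1}^\infty\la P^{(n)}(\omega),Y\circledast Z^{\circledast(n-1)}\ra,$$
so that $H_Z(Z)=G(Z,\omega)-\mathbf 1$. Estimate \eqref{ydtr}, applied with first argument $Y$ and the remaining $n-1$ arguments equal to $Z$, shows that for $\operatorname{supp}Z\cup\operatorname{supp}Y\subset A$ and $\|Z\|_\infty C_4(A)<1$ the series $H_Y(Z)$ converges in $\mathscr L(\mathcal G\otimes\mathbb F)$ and depends continuously and \emph{linearly} on $Y$; linearity is immediate because, by induction on the recursion, $\la P^{(n)}(\omega),Z_1\circledast\dots\circledast Z_n\ra$ is multilinear in $(Z_1,\dots,Z_n)$.

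First I would insert \eqref{ufdycfrtd} (with $Z_1=Y$, $Z_2=\dots=Z_n=Z$) into the definition of $H_Y$, split the resulting sum into its four groups, and re-index each. The groups collapse respectively to $\la\omega,Y\ra\,G(Z,\omega)$, $-H_{\lambda YZ}(Z)$, $-\big(\int_T Y(t)Z(t)\,\sigma(dt)\big)G(Z,\omega)$, and $-H_{\eta YZ^2}(Z)$, so that
$$H_Y(Z)=\la\omega,Y\ra\,G(Z,\omega)-H_{\lambda YZ}(Z)-\Big(\int_T Y(t)Z(t)\,\sigma(dt)\Big)G(Z,\omega)-H_{\eta YZ^2}(Z).$$
Using linearity to write $H_Y+H_{\lambda YZ}+H_{\eta YZ^2}=H_{Y(\mathbf 1+\lambda Z+\eta Z^2)}$, this rearranges into the key identity
$$H_{Y(\mathbf 1+\lambda Z+\eta Z^2)}(Z)=\Big(\la\omega,Y\ra-\int_T Y(t)Z(t)\,\sigma(dt)\Big)G(Z,\omega),\qquad Y\in\mathcal Z(T),$$
where every operator keeps the left/right placement prescribed by \eqref{ufdycfrtd}, both prefactors standing to the left of $G(Z,\omega)$.

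The decisive step is to close this identity. For $\|Z\|_\infty$ small the pointwise operators $\mathbf 1+\lambda(t)Z(t)+\eta(t)Z(t)^2$ are invertible in $\mathscr L(\mathcal G)$ with uniformly bounded inverse, so $\Psi_{\lambda,\eta}(Z)=Z(\mathbf 1+\lambda Z+\eta Z^2)^{-1}$ lies in $\mathcal Z(T)$; choosing $Y=\Psi_{\lambda,\eta}(Z)$ makes $Y(\mathbf 1+\lambda Z+\eta Z^2)=Z$, so the left-hand side becomes $H_Z(Z)=G(Z,\omega)-\mathbf 1$. Since $\Psi_{\lambda,\eta}(Z)Z=Z^2(\mathbf 1+\lambda Z+\eta Z^2)^{-1}$ (the factors commute pointwise), this gives $G(Z,\omega)-\mathbf 1=B\,G(Z,\omega)$ with $B:=\la\omega,\Psi_{\lambda,\eta}(Z)\ra-\int_T\tfrac{Z(t)^2}{\mathbf 1+\lambda(t)Z(t)+\eta(t)Z(t)^2}\,\sigma(dt)$; as $\|B\|<1$ for small $\|Z\|_\infty$, the operator $\mathbf 1-B$ is invertible and $(\mathbf 1-B)G(Z,\omega)=\mathbf 1$ yields \eqref{ftydr}. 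Finally, with $D:=\int_T\tfrac{Z(t)^2}{\mathbf 1+\lambda(t)Z(t)+\eta(t)Z(t)^2}\,\sigma(dt)$ and $f(Z)=(\mathbf 1+D)^{-1}$, the factorization $\mathbf 1+D-\la\omega,\Psi_{\lambda,\eta}(Z)\ra=(\mathbf 1+D)\big(\mathbf 1-f(Z)\la\omega,\Psi_{\lambda,\eta}(Z)\ra\big)$ — which needs no commutativity — produces \eqref{ftsre}.

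I expect the main obstacle to be bookkeeping rather than algebra. One must fix a single radius $C_5(A)$ that simultaneously guarantees convergence of $G$ and of every $H_Y$ that occurs, the pointwise invertibility of $\mathbf 1+\lambda Z+\eta Z^2$ together with membership of $\Psi_{\lambda,\eta}(Z)$ in $\mathcal Z(T)$, and the invertibility of $\mathbf 1-B$ and $\mathbf 1+D$. Equally important, one must scrupulously preserve the left/right order of the factors $\la\omega,\cdot\ra$ and the $\sigma$-integrals throughout the re-indexing, since these in general do not commute with $G(Z,\omega)$; the functional equation only closes because these prefactors consistently appear on the same side.
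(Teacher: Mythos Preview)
Your proposal is correct and takes a genuinely different route from the paper. The paper proceeds by approximation: it fixes a finite partition $\mathscr P$ of $A$, replaces $\lambda,\eta$ by step functions $\lambda_{\mathscr P},\eta_{\mathscr P}$, invokes the known one-dimensional generating function \eqref{ctsrs} on each piece $\Delta_j$, proves a factorization lemma \eqref{cxts} for polynomials over disjoint blocks, patches the pieces together, extends the range of $Z$ by analytic continuation in a complex parameter, and finally passes to the limit over finer partitions. Your argument bypasses all of this by working directly with the recursion \eqref{ufdycfrtd}: the auxiliary family $H_Y(Z)$, linear in the first slot $Y$, turns the recursion into the closed identity $H_{Y(\mathbf 1+\lambda Z+\eta Z^2)}(Z)=\big(\la\omega,Y\ra-\int_T YZ\,d\sigma\big)G(Z,\omega)$, and the substitution $Y=\Psi_{\lambda,\eta}(Z)$ collapses the left side to $G(Z,\omega)-\mathbf 1$. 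This is considerably shorter and more transparent; it also makes clear that the resolvent form is a purely algebraic consequence of the three-term recursion, with no appeal to the one-dimensional theory. What the paper's approach buys in return is an explicit link to the finite-dimensional free Meixner generating function and, as a by-product of Step~4, the factorization stated in Corollary~3.1; your method does not produce that corollary along the way.
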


\begin{remark}\label{tser}
The right hand side of formula \eqref{ftydr} should be understood in the following sense: for any real-valued function $f(x)=\sum_{n=0}^\infty a_nx^n$ which is real-analytic on $(-r,r)$, we write, for a bounded linear operator $B$ whose norm is less than $r$:
$f(B):=\sum_{n=0}^\infty a_n B^n$.
Under our assumption on $Z\in\mathcal Z(T)$, we then have $\frac{Z^l}{\mathbf  1+\lambda Z+\eta Z^2}\in\mathcal Z(T)$, $l=1,2$.
\end{remark}

\begin{proof} We divide the proof into several steps.

Step 1. First, for a fixed $A\in\mathcal B_0(T)$, let us explicitly specify a possible choice of a  constant $C_5(A)$ in the theorem. For each $t\in T$, define $\alpha(t),\beta(t)\in\mathbb C$ so that
$$\alpha(t)+\beta(t)=\lambda(t),\quad \alpha(t)\beta(t)=\eta(t).$$
Hence, for each $x\in\mathbb R$,
$$1+\lambda(t)x+\eta(t)x^2=(1-\alpha(t)x)(1-\beta(t)x).$$
The right hand side of formula \eqref{ftydr} now reads as
\begin{multline}\label{gse}\left(\mathbf 1-\langle \omega,Z(1-\alpha Z)^{-1}(1-\beta Z)^{-1}\rangle\right.\\ \left.\vphantom{\mathbf 1-\langle \omega,Z(1-\alpha Z)^{-1}(1-\beta Z)^{-1}\rangle}+\int_T Z(t)^2(1-\alpha(t)Z(t))^{-1}(1-\beta(t)Z(t))^{-1}\,\sigma(dt)\right)^{-1}\end{multline}
(we consider the above operator in the complexification of the real Hilbert space $\mathcal G\otimes \mathbb F$, for which we keep the same notation).
Set
$$\alpha_A:=\sup_{t\in A}|\alpha(t)|,\quad \beta_A:=\sup_{t\in A}|\beta(t)|.$$
Choose $C_6(A)>0$ so that
$$ \left(\sum_{k=0}^\infty \alpha_A^k C_6(A)^k\right)  \left(\sum_{l=0}^\infty \beta_A^l C_6(A)^l\right)
C_6(A)\big(C_3(A)+C_6(A)\sigma(A)\big)<1.
$$
Then, by virtue of \eqref{xsrzar}, we have that, for each $Z\in\mathcal Z(T)$  such that $\operatorname{supp}Z\subset A$ and $\|Z\|_\infty\le C_6(A)$, formula \eqref{gse} defines a bounded linear operator in $\mathscr L(\mathcal G\otimes\mathbb F)$.
Recall \eqref{ydtr}--\eqref{aw}.  Then setting
\begin{equation}\label{fdst} C_5(A):=\min\{C_4(A)^{-1},C_6(A)\}, \end{equation}
we see that, for each $Z\in\mathcal Z(T)$  such that $\operatorname{supp}Z\subset A$ and $\|Z\|_\infty< C_5(A)$, the left and right hand sides of formula \eqref{ftydr} identify bounded linear operators in $ \mathcal G\otimes\mathbb F$.

Let us denote the operators on the left and right hand sides of formula \eqref{ftydr} by $L(Z)$ and $R(Z)$, respectively.
Fix any $\Xi,\Upsilon\in\mathcal G\otimes\mathbb F$.  It follows that, for any $Z\in\mathcal Z(T)$ such that
$\operatorname{supp}Z\subset A$, the functions
$$ f^{(L)}(z):=(L(zZ)\Xi,\Upsilon)_{\mathcal G\otimes\mathbb F},\quad f^{(R)}(z):=(R(zZ)\Xi,\Upsilon)_{\mathcal G\otimes\mathbb F}$$
are analytic on $\left\{z\in\mathbb C: |z|<C_5(A)\|Z\|^{-1}_\infty\right\}$.

Step 2. Fix any $A\in\mathcal B_0(T)$. Choose any set partition $\mathscr P=\{\Delta_1,\dots,\Delta_J\}$ of $A$, i.e.,  $$A=\bigcup_{j=1}^J\Delta_j,\quad \Delta_j\in\mathcal B_0(T),\ j=1,\dots,J,\ J\in\mathbb N$$
and the sets $\Delta_j$ are mutually disjoint.
Set $$\lambda_j:=\inf_{t\in \Delta_j}\lambda(t),\quad \eta_j:=\inf_{t\in \Delta_j}\eta(t), \quad j=1,\dots,J,$$
and define a function
 $$\lambda_{\mathscr P}(t):=\begin{cases}
\lambda_j,&\text{if }t\in \Delta_j,\ j=1,\dots,J,\\
0,&\text{if }t\in A^c,
 \end{cases}$$
 and analogously a function $\eta_{\mathscr P}(t)$.
 Now, we define a generalized operator-valued process $\omega_{\mathscr P}(t)$ and corresponding non-commutative polynomials $\la P^{(n)}(\omega_{\mathscr P}),f^{(n)}\ra$, $f^{(n)}\in B_0(T^n)$, in the same way as $\omega(t)$ and $\la P^{(n)}(\omega),f^{(n)}\ra$ were defined, but by using the functions $\lambda_{\mathscr P}$ and $\eta_{\mathscr P}$ instead of $\lambda$ and $\eta$, respectively. We stress that these are also defined in the extended Fock space $\mathbb F$ constructed through the function $\eta$. Hence, generally speaking, the operators $\la P^{(n)}(\omega_{\mathscr P}),f^{(n)}\ra$ are not self-adjoint in $\mathbb F$.
  This, however, does not lead to any  problem when we define a generating function $ G_{\mathscr P}(Z)$ of these polynomials. In particular, the corresponding operator-valued measure
$$ M_{\mathscr P}(\Delta):=\la \omega_{\mathscr P},\chi_\Delta\ra,\quad \Delta\in \mathcal B_0(T),$$
satisfies conditions (A1)--(A3) with $U(\Delta)=X^+(\chi_\Delta)+X_{\mathscr P}^0(\chi_\Delta)$ and $V(\Delta)=X_{\mathscr P}^-(\Delta)$, where
\begin{gather*}
X^+(\chi_\Delta):=\int_\Delta\di_t^\dag\,\sigma(dt),\quad X^0_{\mathscr P}(\chi_\Delta):=\int_\Delta\lambda_{\mathscr P}(t)\di_t^\dag\di_t\,\sigma(dt),\\
X^-_{\mathscr P}(\chi_\Delta):=\int_\Delta(\di_t+\eta_{\mathscr P}(t)\di_t^\dag\di_t\di_t)\,\sigma(dt),
\end{gather*}
compare with \eqref{cfdjb}.
(We leave the evaluation of the adjoint operator of $X^-_{\mathscr P}(\chi_\Delta)$ in $\mathbb F$ to the interested reader.)
Furthermore, analogously to \eqref{ydtr}, we get, for any $A\in\mathcal B_0(T)$,  $n\in\mathbb N$ and any $Z_1,\dots,Z_n\in\mathcal Z(T)$ with support in $A$,
\begin{equation}\label{hfch}
\|\la P^{(n)}(\omega_{\mathscr P}),Z_1\circledast\dots\circledast Z_n\ra\|_{\mathscr L(\mathcal G\otimes\mathbb F)}\le C_4(A)^n\|Z_1\|_{\infty}\dotsm\|Z_n\|_\infty,
\end{equation}
with  the same constant $C_4(A)$ given by \eqref{gzass}. (We, in particular, used that $\eta_{\mathscr P}(t)\le \eta(t)$ for all $t\in A$.)

 Step 3. By definition, for each $j=1,\dots,J$, the polynomials $\big(\la P^{(n)}(\omega_{\mathscr P}),\chi_{\Delta_j}^{\otimes n}\ra\big)_{n=1}^\infty$ satisfy the recursion relation
  \begin{multline*}
 \la P^{(n)}(\omega_{\mathscr P}),\chi_{\Delta_j}^{\otimes n}\ra=\big(\la \omega_{\mathscr P},\chi_{\Delta_j}\ra-\lambda_j\big)\la P^{(n-1)}(\omega_{\mathscr P}),\chi_{\Delta_j}^{\otimes (n-1)}\ra
\\
\text{} -(\sigma(\Delta_j)+[n-2]_0\,\eta_j)\la P^{(n-2)}(\omega_{\mathscr P}),\chi_{\Delta_j}^{\otimes (n-2)}\ra,\quad n\ge2.
  \end{multline*}
 Therefore,
 \begin{equation}\label{ydtrcg}
 \la P^{(n)}(\omega_{\mathscr P}),\chi_{\Delta_j}^{\otimes n}\ra=P^{(n)}_{\lambda_j,\eta_j,\sigma(\Delta_j)}(\la \omega_{\mathscr P},\chi_{\Delta_j}\ra),
 \end{equation}
 where  $(P^{(n)}_{\lambda_j,\eta_j,\sigma(\Delta_j)})_{n=0}^\infty$ is a system of polynomials on $\mathbb R$ recursively defined by \begin{gather}P^{(0)}_{\lambda_j,\eta_j,\sigma(\Delta_j)}(u)=1,\quad P^{(1)}_{\lambda_j,\eta_j,\sigma(\Delta_j)}(u)=u,\notag \\
 P^{(n)}_{\lambda_j,\eta_j,\sigma(\Delta_j)}(u)=(u-\lambda_j)P^{(n-1)}_{\lambda_j,\eta_j,\sigma(\Delta_j)}(u)
 -(\sigma(\Delta_j)+[n-2]_0\,\eta_j)
 P^{(n-2)}_{\lambda_j,\eta_j,\sigma(\Delta_j)}(u),\quad n\ge2. \label{ftdrd}
 \end{gather}
 By \cite{a1}, the generating function of $(P^{(n)}_{\lambda_j,\eta_j,\sigma(\Delta_j)})_{n=0}^\infty$ is given by
 \begin{equation}\label{ctsrs}\sum_{n=0}^\infty z^{n}P^{(n)}_{\lambda_j,\eta_j,\sigma(\Delta_j)}(u)=\left(
1-u\,\frac{z}{1+\lambda_j z+\eta_j z^2}+\frac{\sigma(\Delta_j)z^2}{1+\lambda_j z+\eta_j z^2}
\right)^{-1}. \end{equation}
More precisely, for each $r>0$, there exists  $\varepsilon_{r,A}>0$ such that formula \eqref{ctsrs} holds for each $u\in\mathbb R$ with $|u|\le r$ and for each $z\in\mathbb C$ such that $|z|<\varepsilon_{r,A}$.

Let $Z_j\in\mathscr L(\mathcal G)$ be such that $\|Z_j\|_{\mathscr L(\mathcal G)}< C_5(A)$, where $C_5(A)$ is given by \eqref{fdst}. Then, by \eqref{ydtrcg} and \eqref{ctsrs}, we  get
\begin{multline}
\mathbf 1+\sum_{n=1}^\infty Z_j^n\la P^{(n)}(\omega_{\mathscr P}),\chi_{\Delta_j}^{\otimes n}\ra
\\=\left(
\mathbf 1-\la\omega_{\mathscr P},\chi_{\Delta_j}\ra\,\frac{Z_j}{\mathbf 1+\lambda_j Z_j+\eta_j Z_j^2}+\sigma(\Delta_j)\,\frac{Z_j^2}{1+\lambda_j Z_j+\eta_j Z_j^2}
\right)^{-1},\quad j=1,\dots,J.\label{fytd}
\end{multline}
Denote
$$ U_j:=\la\omega_{\mathscr P},\chi_{\Delta_j}\ra\,\frac{Z_j}{\mathbf 1+\lambda_j Z_j+\eta_j Z_j^2}+\sigma(\Delta_j)\,\frac{Z_j^2}{1+\lambda_j Z_j+\eta_j Z_j^2}\, ,\quad j=1,\dots,J.$$
Then \eqref{fytd} is equivalent to
\begin{equation}\label{gfarea}
\sum_{n=1}^\infty \langle P^{(n)}(\omega_{\mathscr P}), (Z_j\chi_{\Delta_j})^{\circledast n}\rangle=\sum_{n=1}^\infty U_j^n,\quad j=1,\dots,J.
\end{equation}

Step 4. We claim that,  for any $n\in\mathbb N$ and any $j_1,j_2,\dots,j_n\in\{1,2,\dots,J\}$ such that $j_1\ne j_2$, $j_2\ne j_3$,\dots, $j_{n-1}\ne j_n$, and any $k_1,k_2,\dots k_n\in\mathbb N$, we have
\begin{multline}\label{cxts}
\langle P^{(k_1+k_2+\dots+k_n)}(\omega_{\mathscr P}),\chi_{\Delta_{j_1}}^{\otimes k_1}\otimes \chi_{\Delta_{j_2}}^{\otimes k_2}\otimes\dots\otimes\chi_{\Delta_{j_n}}^{\otimes k_n}\ra\\
=\la P^{(k_1)}(\omega_{\mathscr P}),\chi_{\Delta_{j_1}}^{\otimes k_1}\ra \la P^{(k_2)}(\omega_{\mathscr P}),\chi_{\Delta_{j_2}}^{\otimes k_2}\ra\dotsm \la P^{(k_n)}(\omega_{\mathscr P}),\chi_{\Delta_{j_n}}^{\otimes k_n}\ra.
\end{multline}
Indeed, first we can prove by induction in $k_1\in\mathbb N$ that, for any fixed $k_2\in\mathbb N$,
and any $j_1,j_2\in\{1,2,\dots,J\}$, $j_1\ne j_2$,
$$ \langle P^{(k_1+k_2)}(\omega_{\mathscr P}),\chi_{\Delta_{j_1}}^{\otimes k_1}\otimes \chi_{\Delta_{j_2}}^{\otimes k_2}\ra=\la P^{(k_1)}(\omega_{\mathscr P}),\chi_{\Delta_{j_1}}^{\otimes k_1}\ra \la P^{(k_2)}(\omega_{\mathscr P}),\chi_{\Delta_{j_2}}^{\otimes k_2}\ra. $$
Then, we prove \eqref{cxts} by induction in $n\in\mathbb N$.

Step 5. Now, fix any $Z_1,\dots,Z_J\in\mathscr L(\mathcal G)$ such that
\begin{equation}\label{xsre} \max_{j=1,\dots,J}\|Z_j\|_{\mathscr L(\mathcal G)}<\frac{C_5(A)}J\,.\end{equation}
Then, as easily seen,
$$\max_{j=1,\dots,J}\|U_j\|_{\mathscr L(\mathcal G\otimes\mathbb F)}<\frac1J\,.$$
By \eqref{gfarea} and \eqref{cxts}, we have:
\begin{align}
&\sum_{n=1}^\infty (U_1+U_2+\dots+U_J)^n\notag\\
&\qquad =\sum_{n=1}^\infty\sum_{\substack{
j_1,j_2,\dots,j_n\in\{1,2,\dots,J\}
\\
j_1\ne j_2,\ j_2\ne j_3,\dots,\ j_{n-1}\ne j_n}}
\left(\sum_{k_1=1}^\infty U_{j_1}^{k_1}\right)\left(\sum_{k_2=1}^\infty U_{j_2}^{k_2}\right)\dotsm
\left(\sum_{k_n=1}^\infty U_{j_n}^{k_n}\right)\notag\\
&\qquad =\sum_{n=1}^\infty\sum_{\substack{
j_1,j_2,\dots,j_n\in\{1,2,\dots,J\}
\\
j_1\ne j_2,\ j_2\ne j_3,\dots,\ j_{n-1}\ne j_n}}
\left(\sum_{k_1=1}^\infty \la P^{(k_1)}(\omega_{\mathscr P}),(Z_{j_1}\chi_{\Delta_{j_1}})^{\circledast k_1}\ra\right)\notag\\
&\qquad\quad\times\left(\sum_{k_2=1}^\infty \la P^{(k_2)}(\omega_{\mathscr P}),(Z_{j_2}\chi_{\Delta_{j_2}})^{\circledast k_2}\ra\right)\dotsm \left(\sum_{k_n=1}^\infty \la P^{(k_n)}(\omega_{\mathscr P}),(Z_{j_n}\chi_{\Delta_{j_n}})^{\circledast k_n}\ra\right)\notag\\
&\qquad =\sum_{n=1}^\infty\sum_{\substack{
j_1,j_2,\dots,j_n\in\{1,2,\dots,J\}
\\
j_1\ne j_2,\ j_2\ne j_3,\dots,\ j_{n-1}\ne j_n}}
\sum_{k_1=1}^\infty \sum_{k_2=1}^\infty\dotsm \sum_{k_n=1}^\infty\notag\\
&\qquad\quad\times
\la P^{(k_1+k_2+\dots+k_n)}(\omega_{\mathscr P}), (Z_{j_1}\chi_{\Delta_{j_1}})^{\circledast k_1}
\circledast (Z_{j_2}\chi_{\Delta_{j_2}})^{\circledast k_2}\circledast\dots\circledast (Z_{j_n}\chi_{\Delta_{j_n}})^{\circledast k_n}\ra
\notag\\
&\qquad=\sum_{n=1}^\infty
\la P^{(n)}(\omega_{\mathscr P}),(Z_{1}\chi_{\Delta_{1}}+Z_{2}\chi_{\Delta_{2}}+\dots+Z_{n}\chi_{\Delta_{n}})^{\circledast n}\ra.\notag
\end{align}
Setting
\begin{equation}\label{gxsre} Z(t)=Z_1\chi_{\Delta_1}(t)+Z_2\chi_{\Delta_1}(t)+\dots+Z_J\chi_{\Delta_J}(t),\quad t\in T,\end{equation}
we thus get
\begin{multline}\label{vcdj}\mathbf 1+\sum_{n=1}^\infty \la P^{(n)}(\omega_{\mathscr P}),Z^{\circledast n}\ra\\=
\left(
\left\la \omega_{\mathscr P},\frac Z{1+\lambda_{\mathscr P} Z+\eta_{\mathscr P}Z^2}\right\ra+\int_T
\frac {Z(t)^2}{1+\lambda_{\mathscr P}(t) Z(t)+\eta_{\mathscr P}(t)Z(t)^2}\,\sigma(dt)
\right)^{-1},
\end{multline}
provided \eqref{xsre} holds.

Step 6. Denote the left and right hand sides of formula \eqref{vcdj} by $L_{\mathscr P}(Z)$ and
$R_{\mathscr P}(Z)$, respectively. Analogously to Step~1, we see that  $L_{\mathscr P}(Z)$ and
$R_{\mathscr P}(Z)$ are in $\mathscr L(\mathcal G\otimes\mathbb F)$ for any $Z\in\mathcal Z(T)$ such that $\operatorname{supp}Z\subset A$ and $\|Z\|_\infty< C_5(A)$, and furthermore, for
fixed  $\Xi,\Upsilon\in\mathcal G\otimes\mathbb F$ and any $Z\in\mathcal Z$ such that
$\operatorname{supp}Z \subset A$, the functions
$$ f^{(L)}_{\mathscr P}(z):=(L_{\mathscr P}(zZ)\Xi,\Upsilon)_{\mathcal G\otimes\mathbb F},\quad
f^{(R)}_{\mathscr P}(z):=(R_{\mathscr P}(zZ)\Xi,\Upsilon)_{\mathcal G\otimes\mathbb F}
$$
are  analytic on $\left\{z\in\mathbb C: |z|<C_5(A)\|Z\|^{-1}_\infty\right\}$.
By \eqref{vcdj}, we have
\begin{equation}\label{ydserw}f^{(L)}_{\mathscr P}(z) =f^{(R)}_{\mathscr P}(z) \end{equation}
for all $z\in\mathbb C$ satisfying $|z|<C_5(A)J^{-1} \|Z\|^{-1}_\infty$. Therefore, equality \eqref{ydserw}
holds for all $z\in \mathbb C$ satisfying $|z|<C_5(A)\|Z\|^{-1}_\infty$. Hence, \eqref{vcdj} holds for any $Z\in\mathcal Z(T)$ as in \eqref{gxsre}, provided that
$$ \max_{j=1,\dots,J}\|Z_j\|_{\mathscr L(\mathcal G)}< C_5(A). $$

Step 7. Now, fix any $Z\in\mathcal Z(T)$ as above and consider any partition $\mathscr P'$ of $A$ which is finer than $\mathscr P$, i.e., any element $\Delta\in \mathscr P'$ is a subset of some $\Delta_j\in\mathscr P$. Evidently, formula \eqref{vcdj} remains true if we replace $\omega_{\mathscr P}$ with $\omega_{\mathscr P'}$ in it. For each $n\in\mathbb N$, denote by $\mathscr P_n$ a partition of $A$ which is finer than $\mathscr P$ and such that, for each $\Delta\in \mathscr P_n$,
$$\left(\sup_{s,t\in\Delta}|\lambda(t)-\lambda(s)|\right)\vee\left(\sup_{s,t\in\Delta}|\eta(t)-\eta(s)|\right)\le\frac 1n\, .$$
(Clearly, such  $\mathscr P_n$ exists.) By the dominated convergence theorem,
$$L_{\mathscr P_n}(Z)\to L(Z), \quad R_{\mathscr P_n}(Z)\to R(Z)\quad\text{as }n\to\infty$$
in $\mathscr L(\mathcal G\otimes\mathbb F)$. Hence, formula \eqref{ftydr} holds for any simple mapping $Z\in\mathcal Z(T)$ satisfying $\operatorname{supp} Z\subset A$ and $\|Z\|_\infty< C_5(A)$. Now, \ for a general $Z\in\mathcal Z(T)$, \eqref{ftydr} follows by approximation of $Z$ by simple mappings and the dominated convergence theorem.

Finally, formulas \eqref{ftsre}, \eqref{drt} follow directly from \eqref{ftydr}, since under our assumptions, the operator
$$\mathbf 1+\int_T\frac{Z(t)^2}{1+\lambda(t)Z(t)+\eta(t)Z(t)^2}\,\sigma(dt)$$
is invertible. \end{proof}

\begin{corollary}
Let $\Delta_1,\Delta_2,\dots,\Delta_n\in\mathcal B_0(T)$ ($n\ge2$) be such that $\Delta_i\cap\Delta_{i+1}=\varnothing$, $i=1,2,\dots,n-1$. 
Let $k_1,k_2,\dots,k_n\in\mathbb N$ and let, 
for each $i=1,2,\dots,n$, $g^{(k_i)}\in B_0(T^{k_i})$ vanish outside the set $\Delta_i^{k_i}$. Then
\begin{multline*}\la P^{(k_1+k_2+\dots+k_n)}(\omega),g^{(k_1)}\otimes g^{(k_2)}\otimes\dots\otimes g^{(k_n)}\ra\\
=
\la P^{(k_1)}(\omega),g^{(k_1)}\ra\la P^{(k_2)}(\omega),g^{(k_2)}\ra\dotsm \la P^{(k_n)}(\omega),g^{(k_n)}\ra.\end{multline*}
\end{corollary}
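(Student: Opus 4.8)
The plan is to reduce the general factorization statement to the special case already established inside the proof of Theorem~\ref{xdf}. Recall that in Step~4 of that proof, formula \eqref{cxts} establishes exactly this kind of factorization, but only for the approximating processes $\omega_{\mathscr P}$ and only for products of indicator functions $\chi_{\Delta_j}^{\otimes k}$ on the pieces of a partition. The corollary asks for the same factorization for the genuine process $\omega$ and for arbitrary test kernels $g^{(k_i)}\in B_0(T^{k_i})$ supported in $\Delta_i^{k_i}$, under the weaker hypothesis that only \emph{consecutive} sets are disjoint, i.e.\ $\Delta_i\cap\Delta_{i+1}=\varnothing$.

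First I would fix mutually disjoint relatively compact sets $A_1,\dots,A_n$ that enclose the supports, more precisely choosing them so that $\Delta_i\subset A_i$ whenever possible; the subtlety is that the hypothesis only guarantees disjointness of \emph{neighboring} $\Delta_i$, so the sets themselves need not be globally disjoint. The key observation that rescues the argument is that the recursion \eqref{fdser} defining $\la P^{(n)}(\omega),f^{(n)}\ra$ only couples \emph{adjacent} arguments: the correction terms involve $f_1 f_2$, the product $f_1(t)f_2(t)$ integrated against $\sigma$, and $f_1 f_2 f_3$. Consequently, whenever the kernel factorizes as a tensor product $g^{(k_1)}\otimes\dots\otimes g^{(k_n)}$ with adjacent blocks supported on disjoint sets, all the coupling terms that would bridge two consecutive blocks vanish, because the pointwise products $g^{(k_i)}(\cdot)g^{(k_{i+1})}(\cdot)$ are identically zero. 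This is precisely why disjointness of consecutive sets alone suffices.

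The main step is therefore an induction that mirrors Step~4 but works directly with the recursion \eqref{fdser} rather than passing through the generating function. I would argue as follows: prove first that $\la P^{(k_1+k_2)}(\omega),g^{(k_1)}\otimes g^{(k_2)}\ra=\la P^{(k_1)}(\omega),g^{(k_1)}\ra\,\la P^{(k_2)}(\omega),g^{(k_2)}\ra$ by induction on $k_1$, using \eqref{fdser} to peel off the leftmost variable; since its partner in every correction term is supported in $\Delta_1$ while the boundary with the second block is disjoint, the only surviving term reproduces the recursion for $P^{(k_1)}$ acting on the first block, with the second block factor $\la P^{(k_2)}(\omega),g^{(k_2)}\ra$ carried along as an inert right factor. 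The base case $k_1=1$ is the defining identity $\la\omega,g^{(1)}\ra\,\la P^{(k_2)}(\omega),\dots\ra$ minus the coupling terms, which vanish by disjointness. Then I would induct on $n$ to obtain the full product \eqref{cxts}-style identity.

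The hard part will be bookkeeping the recursion carefully at the boundary between the first block and the remainder, making sure that the three correction terms in \eqref{fdser} (the $\lambda$-term, the $\sigma$-integral term, and the $\eta$-term involving three adjacent arguments) all genuinely drop out; in particular the three-variable $\eta$-term $g^{(k_1)}$'s interaction needs the disjointness to be checked across the block boundary when $k_1=1$ or $k_1=2$, which are slightly delicate edge cases. Note that this is cleaner than re-deriving everything through partitions: the factorization for $\omega$ itself follows because \eqref{fdser} holds verbatim for $\omega$, without any need to introduce $\omega_{\mathscr P}$ or pass to a limit. Once the two-block case and the inductive step on $n$ are in place, the corollary follows immediately.
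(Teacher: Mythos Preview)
Your proposal is correct and follows essentially the same approach as the paper. The paper's own proof is the single line ``The statement follows analogously to the proof of formula \eqref{cxts},'' and your plan---prove the two-block case by induction on $k_1$ using the recursion \eqref{fdser}, then induct on $n$---is exactly what that proof of \eqref{cxts} does; you have simply (and correctly) made explicit why only \emph{consecutive} disjointness is needed, namely that the correction terms in \eqref{fdser} couple only adjacent arguments.
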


\begin{proof} The statement follows analogously to the proof of formula \eqref{cxts}.

\end{proof}

\section{A globality of the annihilation operators}\label{jfdrt}

Recall that the free Meixner process $(X(f))_{f\in\mathcal D}$ is a family of bounded liner operators acting in $\mathbb F$. In view of the unitary isomorphism between $\mathbb F$ and the non-commutative $L^2$-space $L^2(\tau)$ (see \cite{bl}), each $X(f)$ acts in $L^2(\tau)$ as the operator of left multiplication by $\la\omega,f\ra$. In view of the expansion \eqref{cfdjb} it is, in particular, desirable to better understand the action of the annihilation operators $\di_t$, $t\in T$. Each such operator is well defined as a linear operator acting on the set of continuous polynomials in $\omega$ (denoted by $\mathbf{CP}$ in \cite{bl}) through
$$\di_t \la P^{(n)}(\omega),f^{(n)}\ra =\la P^{(n-1)}(\omega),f^{(n)}(t,\cdot)\ra,\quad f^{(n)}\in\mathcal D^{(n)}.$$
(Recall that each continuous polynomial has a unique representation as a finite sum of  orthogonal polynomials $\la P^{(n)}(\omega),f^{(n)}\ra$ with $f^{(n)}\in\mathcal D^{(n)}$.)

In the classical case, in  one dimension, the annihilation operator $\di$, defined by $\di P^{(n)}(t)=nP^{(n)}(t)$, is an analytic function of the operator of differentiation, $D$. Indeed, it directly follows from \eqref{drtyd} that $\di=\Psi_{\lambda,\eta}^{-1}(D)$, cf.\  \cite{Meixner}. This result has its counterpart in the infinite-dimensional case: as follows from \eqref{uyfrutf}, the annihilation operator at a point $t$ satisfies \begin{equation}\label{fgsare}
\di_t=\Psi_{\lambda(t),\eta(t)}^{-1}(D_t),\end{equation}
 where $D_t$ is the operator of differentiation in the direction of the delta-function at $t$ (often called Hida--Malliavin  derivative), cf.\ \cite{Ly1,Rod}. In particular, if $\lambda(t)=\eta(t)=0$
(Gassian white noise at $t$),
\begin{equation}\label{dss} \di_t=D_t,\end{equation}
and more generally, if $\eta(t)=0$ (Poisson white noise if $\lambda(t)\ne0$)
\begin{equation}\label{dctdv} \di_t=\frac{1}{\lambda(t)}\left(e^{\lambda(t)D_t}-1\right)=\sum_{k=1}^\infty \frac{\lambda(t)^{k-1}}{k!}\, D_t^k.
\end{equation}

In the free case, in one dimension, one has
\begin{equation}\label{dst} G(z,t)=(1-t\Psi_{\lambda,\eta+k}(z))^{-1}f(z),\end{equation}
where 
\begin{align*}\Psi_{\lambda,\eta+k}(z)&=\frac{z}{1+\lambda z+(\eta+k)z^2},\\
 f(z)&=\frac{1+\lambda z+\eta z^2}{1+\lambda z+(\eta+k) z^2}\, .\end{align*}
 Therefore, the corresponding annihilation operator, defined by $\di P^{(n)}(t)=[n]_0P^{(n-1)}(t)$, satisfies $\di=\Psi_{\lambda,\eta+k}^{-1}(D)$, where $D$ now denotes the operator of free differentiation: $Dt^n=[n]_0t^{n-1}$, or equivalently $Df(t)=\frac{f(t)-f(0)}t$, cf. \cite{LR}.
In the infinite-dimensional case, we  define operators of free differentiation by setting, for each $t\in T$,
 \begin{equation}\label{gtse}D_t\la \omega^{\otimes n},f^{(n)}\ra=[n]_0\la \omega^{\otimes(n-1)},f^{(n)}(t,\cdot)\ra,\quad f^{(n)}\in \mathcal D^{(n)}.\end{equation}
However, a direct analogy with the classical case and the free one-dimensional case breaks down at this point: the operator $\di_t$ cannot be represented as a function of $D_t$. Indeed, the operator $f(Z)$ in \eqref{ftsre} is  `global': according to \eqref{drt}, $f(Z)$ depends  on the whole `trajectory' $(Z(s))_{s\in T}$. Still, in the free Gauss--Poisson case, we will now derive a free counterpart of formulas \eqref{fgsare}--\eqref{dctdv}.

So,  let $\eta\equiv0$. Let $NC_{\ge2}(1,2,\dots,n)$ denote the set of all non-crossing partitions of $\{1,2,\dots,n\}$ such that each element of a partition contains at least two points. Analogously to \cite{bl}, we define, for each $\zeta\in NC_{\ge2}(1,2,\dots,n)$,
$$W^-(\zeta)(t_1,\dots,t_n)=\prod_{l\ge 2}\prod_{\{i_1,i_2,\dots,i_l\}\in\zeta}\lambda^{l-2}(t_{i_1})\delta(t_{i_1},t_{i_2},\dots,t_{i_l}).$$
We define a linear operator $\mathbb G$ acting on $\mathbf{CP}$ (a `global' operator) by
\begin{equation}\label{ghfsaer}
\mathbb G:=\mathbf 1+\sum_{n=2}^\infty\sum_{\zeta\in NC_{\ge2}(1,2,\dots,n)}\int_{T^n}\sigma(dt_1)\sigma(dt_2)\dotsm \sigma(dt_n)W^-(\zeta)(t_1,\dots,t_n) D_{t_1}D_{t_2}\dotsm D_{t_n}.
\end{equation}In fact, by virtue of \eqref{gtse}, when the operator $\mathbb G$ acts on a polynomial from $\mathbf {CP}$, all but finitely many terms in the sum in \eqref{ghfsaer} vanish. For example,
\begin{align}
& \mathbb G\la \omega^{\otimes 4},f_1\otimes f_2\otimes f_3\otimes f_4\ra=\bigg(\mathbf 1+\int_{T^2}\sigma(dt_1)\sigma(dt_2)\delta(t_1,t_2)D_{t_1}D_{t_2}\notag\\
&\quad+\int_{T^3}\sigma(dt_1)
\sigma(dt_2)\sigma(dt_3)\lambda(t_1)\delta(t_1,t_2,t_3)D_{t_1}D_{t_2}D_{t_3}\notag\\
&\quad+\int_{T^4}\sigma(dt_1)
\sigma(dt_2)\sigma(dt_3)\sigma(dt_4)\big(\delta(t_1,t_2)\delta(t_3,t_4)+\delta(t_1,t_4)\delta(t_2,t_3)\notag\\
&\quad+\lambda(t_1)^2
\delta(t_1,t_2,t_3,t_4)\big)D_{t_1}D_{t_2}D_{t_3}D_{t_4}\bigg)\la\omega^{\otimes 4},f_1\otimes f_2\otimes f_3\otimes f_4\ra\notag\\
&=\la\omega^{\otimes 4},f_1\otimes f_2\otimes f_3\otimes f_4\ra+\la f_1f_2\ra_{\sigma}\la \omega^{\otimes 2},f_3\otimes f_4\ra\notag\\
&\quad+\la \lambda f_1f_2f_3\ra_\sigma
\la\omega,f_4\ra+\la f_1f_2\ra_\sigma \la f_3f_4\ra_\sigma
+\la f_1f_4\ra_\sigma \la f_2f_3\ra_\sigma+\la \lambda^2 f_1f_2f_3f_4\ra_\sigma,\label{ydr}
\end{align}
where $\la f\ra_\sigma:=\int_T f(t)\sigma(dt)$.

\begin{theorem}\label{sers}
Let $\eta\equiv0$. For each $t\in T$, the operator $\di_t$
acting on $\mathbf{CP}$ has the following representation:
\begin{equation}\label{hser}
\di_t=\Psi_{\lambda(t),0}^{-1}(D_t\mathbb G)=\frac{D_t\mathbb G}{\mathbf1-\lambda(t)D_t\mathbb G}=\sum_{k=1}^\infty \lambda(t)^{k-1}(D_t \mathbb G)^k.\end{equation}
In particular, if $\lambda(t)=0$,
$$\di_t=D_t \mathbb G.$$
\end{theorem}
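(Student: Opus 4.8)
The plan is to prove the equivalent generating–function identity $\Psi_{\lambda(t),0}(\di_t)\,G(Z,\omega)=D_t\mathbb G\,G(Z,\omega)$ and then invoke the uniqueness result of subsec.~\ref{fstrs}. Recall from Theorem~\ref{xdf} (with $\eta\equiv0$) that, writing $W:=\la\omega,\Psi_{\lambda,0}(Z)\ra$,
$$G(Z,\omega)=\Big(\mathbf1-W+\int_T\tfrac{Z(t)^2}{1+\lambda(t)Z(t)}\,\sigma(dt)\Big)^{-1}=\sum_{k=0}^\infty(f(Z)W)^kf(Z)=\sum_{k=0}^\infty\la\omega^{\otimes k},\Phi_k\ra,$$
where the monomial kernel $\Phi_k(s_1,\dots,s_k)$ is the alternating product $f(Z)\,\Psi_{\lambda,0}(Z)(s_1)\,f(Z)\dotsm f(Z)\,\Psi_{\lambda,0}(Z)(s_k)\,f(Z)$ and $f(Z)$ is as in \eqref{drt}. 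Two facts drop out at once. First, peeling the leftmost variable in $\di_t\la P^{(n)}(\omega),Z^{\circledast n}\ra=Z(t)\la P^{(n-1)}(\omega),Z^{\circledast(n-1)}\ra$ gives $\di_t\,G(Z,\omega)=Z(t)\,G(Z,\omega)$. Second, applying $D_t$ to the monomial expansion and using $D_t\la\omega^{\otimes k},\Phi_k\ra=\la\omega^{\otimes(k-1)},f(Z)\Psi_{\lambda,0}(Z)(t)\,\Phi_{k-1}\ra$, the leading operator factor pulls out on the left and yields $D_t\,G(Z,\omega)=f(Z)\,\Psi_{\lambda,0}(Z(t))\,G(Z,\omega)$.

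Since $Z(t)$ acts on $\mathcal G$ while $\di_t$ acts on $\mathbb F$, they commute, so $\di_t^{\,k}G=Z(t)^kG$ and hence $\Psi_{\lambda(t),0}(\di_t)G=\Psi_{\lambda,0}(Z(t))\,G$. Thus everything reduces to the main lemma
$$\mathbb G\,G(Z,\omega)=f(Z)^{-1}\,G(Z,\omega).$$
Granting it, and using that the operator $f(Z)^{-1}$ commutes with $D_t$, one gets $D_t\mathbb G\,G=f(Z)^{-1}D_tG=\Psi_{\lambda,0}(Z(t))\,G=\Psi_{\lambda(t),0}(\di_t)\,G$. As the generating function uniquely identifies the underlying polynomials (Proposition~\ref{tyds}), the operators $\Psi_{\lambda(t),0}(\di_t)$ and $D_t\mathbb G$ then coincide on all of $\mathbf{CP}$; note that on any fixed continuous polynomial the functional–calculus series in $\di_t$ and in $D_t\mathbb G$ terminate, so no convergence question arises. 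This is exactly \eqref{hser}.

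To prove the main lemma I would expand $\mathbb G$ termwise on the monomial series: the string $D_{s_1}\dotsm D_{s_m}$ contracts the first $m$ arguments of $\Phi_k$, and $W^-(\zeta)$ collapses each block of the non-crossing partition $\zeta\in NC_{\ge2}(1,\dots,m)$ to one point $u\in T$, inserting a weight $\lambda(u)^{l-2}$ for a block of size $l$. In the scalar model ($\mathcal G=\mathbb R$, so $f(\xi)$ is a number) this is transparent: a block of size $l$ contributes the scalar $\kappa_l:=\int_T\lambda(u)^{l-2}\Psi_{\lambda,0}(\xi)(u)^l\,\sigma(du)$, one finds $\mathbb G\,G=M(f(\xi))\,G$ with $M(x)=\sum_{m\ge0}x^m\sum_{\zeta\in NC_{\ge2}(1,\dots,m)}\prod_{V\in\zeta}\kappa_{|V|}$ the free moment series of the cumulants $(\kappa_l)_{l\ge2}$, and the free moment–cumulant relation $M(x)=K(xM(x))$ with $K(x):=1+\sum_{l\ge2}\kappa_lx^l$ applies. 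Setting $g:=\Psi_{\lambda,0}(\xi)$ one has the elementary identity $\tfrac{g^2}{1-\lambda g}=\tfrac{\xi^2}{1+\lambda\xi}$, whence $K(1)=1+\int_T\tfrac{\xi^2}{1+\lambda\xi}\,\sigma=f(\xi)^{-1}$, i.e. $f(\xi)K(1)=1$. Thus $y=1$ solves the fixed–point equation $y=f(\xi)K(y)$ satisfied by $y:=f(\xi)M(f(\xi))$, and uniqueness near $\xi=0$ forces $M(f(\xi))=f(\xi)^{-1}$, giving the lemma in the scalar case.

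The main obstacle is the non-commutative bookkeeping: for operator-valued $Z$ the block contributions no longer reduce to scalar cumulants, since the contracted factors $\Psi_{\lambda,0}(Z)(u)$ are interleaved with the operators $f(Z)$ in a prescribed order. What keeps this manageable is precisely the nested structure of non-crossing partitions: decomposing $\zeta$ according to the block containing the first index produces an operator-valued Schwinger–Dyson (fixed-point) equation for $Y:=\mathbb G\,G$, the operator analogue of $y=f(Z)K(y)$. One then checks that $Y=f(Z)^{-1}G$ solves it — the verification again resting on $\tfrac{Z^2}{1+\lambda Z}=\sum_{l\ge2}\lambda^{l-2}\Psi_{\lambda,0}(Z)^l$ — with uniqueness following by analyticity from $Z=0$, where $Y=G$. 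Alternatively, I would transport the step–function/partition reduction already developed in the proof of Theorem~\ref{xdf}: verify the lemma for the decoupled process $\omega_{\mathscr P}$ on simple $Z=\sum_jZ_j\chi_{\Delta_j}$, where the factorization \eqref{cxts} organizes the contractions block by block, and then pass to the limit by dominated convergence. I expect this operator-valued non-crossing sum to be the only genuinely delicate point; the rest is bookkeeping around the two generating–function identities above.
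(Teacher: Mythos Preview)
Your approach is genuinely different from the paper's and, modulo the delicate point you flag, sound. The paper never touches the generating function $G(Z,\omega)$ in its proof of this theorem; instead it works directly with the combinatorial expansion of $\la\omega^{\otimes n},f_1\otimes\dots\otimes f_n\ra$ in terms of the $\pm1$-marked non-crossing partition classes $G_n$ established in \cite{bl}. It decomposes each $\varkappa\in G_n$ according to the block with mark $+1$ containing the leftmost index, writes the remaining structure as a sequence of $NC_{\ge2}$ partitions sandwiched between the indices of that block, and reads off directly that applying $\di_t$ produces exactly $\sum_{k\ge1}\lambda(t)^{k-1}(D_t\mathbb G)^k$ on monomials --- a one-step, purely combinatorial computation.

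Your route instead leverages Theorem~\ref{xdf} and the free moment--cumulant relation. The scalar computation you give is correct: the identities $D_sG=f\,\Psi_{\lambda,0}(\xi(s))\,G$, $K(1)=f^{-1}$, and the fixed-point argument $y=fK(y)\Rightarrow y=1$ all check out. What your approach buys is a conceptual link between $\mathbb G$ and the free cumulant functional (and a natural explanation of why $\Psi^{-1}_{\lambda,0}$ appears); what it costs is the extra machinery --- extending $\di_t$, $D_t$, $\mathbb G$ to act on the operator-valued polynomials $\la P^{(n)}(\omega),Z^{\circledast n}\ra\in\mathscr L(\mathcal G\otimes\mathbb F)$, and the operator-valued Schwinger--Dyson step you identify. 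One small caveat: your appeal to Proposition~\ref{tyds} is not quite as stated, since that proposition concerns two sequences of measures rather than two operators on $\mathbf{CP}$. You would need to extract, from $A\,G(Z)=B\,G(Z)$, agreement of each homogeneous component $A\la P^{(n)}(\omega),Z^{\circledast n}\ra=B\la P^{(n)}(\omega),Z^{\circledast n}\ra$ (matching powers of a scalar parameter, as in the proof of that proposition), and then rerun the full-Fock-space trick from that proof to recover $A=B$ on $\la P^{(n)}(\omega),\chi_{\Delta_1}\otimes\dots\otimes\chi_{\Delta_n}\ra$. That works, but it is an adaptation of the argument rather than a direct citation.
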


\begin{remark}
When the operator $\sum_{k=1}^\infty \lambda(t)^{k-1}(D_t\mathbb G)^k$ acts on a polynomial from $\mathbf{CP}$, all but finitely many terms in the sum vanish.
\end{remark} 

\begin{remark} The reader is advised to compare formulas \eqref{dctdv} and \eqref{hser}. Recall that the free counterpart of $k!$ is $[k]_0!=1$.
\end{remark}

\begin{proof} First, we mention that, by  \eqref{kjfdry},
$ \Psi_{\lambda,0}(z)=z/(1+\lambda z)$,
and so
$ \Psi^{-1}_{\lambda,0}(z)=z/(1-\lambda z$.

Let now $n\in\mathbb N$. Recall that, in \cite{bl} (after Theorem~2.1), we introduced the class $G_n$ of $\pm1$-marked non-crossing partitions of $\{1,2,\dots,n\}$ such that each element of a partition with mark $-1$ has at least two elements and no element of a partition with mark $+1$ is `within' any other element of this partition. Analogously, for any set $\{i_1,i_2,\dots,i_k\}\subset\mathbb N$, we denote by $G(i_1,i_2,\dots,i_k)$ the corresponding class of marked partitions of $\{i_1,i_2,\dots,i_k\}$.
For any two disjoint sets $\{i_1,i_2,\dots,i_k\}$ and $\{j_1,j_2,\dots,j_l\}$
and any $\varkappa_1\in G(i_1,i_2,\dots,i_k)$, $\varkappa_2\in G(j_1,j_2,\dots,j_l)$, we
may consider $\varkappa_1\cup \varkappa_2$ as an element of
$ G(i_1,i_2,\dots,i_k,j_1,j_2,\dots,j_l )$, provided $\varkappa_1\cup \varkappa_2$ indeed satisfies the necessary conditions to be in this set.

We also denote by $G^-(i_1,i_2,\dots,i_k)$ the subset of $G(i_1,i_2,\dots,i_k)$ consisting of all marked partitions whose all elements have mark $-1$. (Note that each set from such a partition has at least two elements.)

By \cite[Theorem~2.3]{bl} and using the notation introduced in that paper, for any $f_1,\dots,f_n\in\mathcal D$, we get
\begin{align}
& \la \omega^{\otimes n}, f_1,\otimes\dots\otimes f_n\ra =\sum_{\varkappa\in G_n}\int_{T^n}\sigma(dt_1)
\dotsm \sigma(dt_n)\, {:} W(\varkappa) \omega(t_1)\dotsm \omega(t_n){:}\, f(t_1)\dotsm f(t_n)\notag\\
& \quad =\sum_{\substack{k=1,\dots,n\\ k\ne2}}\,\sum_{\varkappa_1\in G^-(1,2,\dots,k-1)}\sum_{m=1}^{n-k-1}
\sum_{\substack{ \{i_1,i_2,\dots,i_m\}\subset \{k,k+1,\dots,n\}\\ k=i_1<i_2<\dots<i_m}}\,
\sum_{\varkappa_2\in G^-(i_1+1,i_1+2,\dots,i_2-1)}\notag\\
&\quad\times \sum_{\varkappa_3\in G^-(i_2+1,i_2+2,\dots,i_3-1)}\dots\sum_{\varkappa_m\in G^-(i_{m-1}+1,i_{m-1}+2,\dots,i_m-1)}\sum_{\varkappa_{m+1}\in G(i_m+1,i_m+2,\dots,n)}\notag\\
&\quad \times\int_{T^n}\sigma(dt_1)
\dotsm \sigma(dt_n)\, {:} W(\varkappa_1\cup\varkappa_2\cup\dots\cup \varkappa_m\cup\varkappa_{m+1}
 \cup \zeta(i_1,i_2,\dots,i_m))\notag\\
 &\quad\times \omega(t_1)\dotsm \omega(t_n){:}\, f(t_1)\dotsm f(t_n),\notag
\end{align}
where $\zeta(i_1,i_2,\dots,i_m)$ denotes the element of $G(i_1,i_2,\dots,i_m)$ which has only one element: the set $\{i_1,i_2,\dots,i_m\}$ with mark $+1$.
Therefore, using obvious notations, we get
\begin{align}
&\di_t \la \omega^{\otimes n}, f_1,\otimes\dots\otimes f_n\ra = \sum_{\substack{k=1,\dots,n\\ k\ne2}}\, \sum_{\zeta_1\in NC_{\ge2}(1,2,\dots,k-1)}\notag\\
&\quad\times \int_{T^{k-1}}\sigma(dt_1)\dotsm\sigma(dt_{k-1})W^-(\zeta_1)(t_1,\dots,t_{k-1})f_1(t_1)\dotsm
f_{k-1}(t_{k-1})\notag\\&\quad\times \sum_{m=1}^{n-k-1}\sum_{\substack{ \{i_1,i_2,\dots,i_m\}\subset \{k,k+1,\dots,n\}\\ k=i_1<i_2<\dots<i_m}}\lambda(t)^{m-1} f_{i_1}(t)f_{i_2}(t)\dotsm f_{i_m}(t)\notag\\
&\quad\times\prod_{l=2}^m\sum_{\zeta_l\in NC_{\ge2}(i_{l-1}+1,i_{l-1}+2,\dots,i_l-1)}\int_{T^{i_l-i_{l-1}-1}}
\sigma(dt_{i_{l-1}+1})\sigma(dt_{i_{l-1}+2})\dotsm \sigma (dt_{i_l-1})\notag\\
&\quad\times W(\zeta_l)(t_{i_{l-1}+1},t_{i_{l-1}+2},\dots, t_{i_l-1}) f_{i_{l-1}+1}(t_{i_{l-1}+1})
 f_{i_{l-1}+2}(t_{i_{l-1}+2})\dotsm f_{i_l-1}(t_{i_l-1})\notag\\
 &\quad  \times\left(\sum_{\varkappa\in G(i_m+1,i_m+2,\dots,n)}
 \int_{T^{n-i_m}}\sigma(dt_{i_m+1})\sigma(dt_{i_m+2})\dotsm\sigma(dt_n)\right.\notag\\
 &\quad\left.
 \vphantom{\sum_{\varkappa\in G(i_m+1,i_m+2,\dots,n)}
 \int_{T^{n-i_m}}\sigma(dt_{i_m+1})\sigma(dt_{i_m+2})\dotsm\sigma(dt_n)}
  \times {:}W(\varkappa)\omega(t_{i_m+1})
 \omega(t_{i_m+2})\dotsm\omega(t_n){:}\, f_{i_m+1}(t_{i_m+1})f_{i_m+2}(t_{i_m+2})\dotsm f_n(t_n)
 \right).\label{cfdx}
\end{align}
Since the latter expression in the brackets is equal to
$$ \la \omega^{\otimes(n-i_m)},f_{i_m+1}\otimes f_{i_m+2}\otimes\dots\otimes f_{n}\ra,$$
we conclude  the statement of the theorem from \eqref{cfdx}.
\end{proof}

\begin{center}
{\bf Acknowledgements}\end{center}
The research was partially supported by the International Joint Project grant 2008/R2 of the Royal Society.
 The authors also acknowledge the financial support of the SFB~701 ``Spectral structures and topological methods in mathematics'', Bielefeld University. MB was partially supported by the  
 Polish Ministry of Science and Higher Education, grant N N201 364436.   EL was partially supported by  
  the PTDC/MAT/67965/2006 grant, University of Madeira.


\begin{thebibliography}{99}

\bibitem{a1}  Anshelevich, M.: Free martingale polynomials. J. Funct.\ Anal.\ 201, 228--261 (2003)


\bibitem{a2} Anshelevich, M.: Free Meixner states. Commun.\ Math.\ Phys.\ 276,  863--899 (2007)


\bibitem{a5} Anshelevich, M.: Orthogonal polynomials with a resolvent-type generating function. Trans.\ Amer.\ Math.\ Soc.\ 360,  4125--4143 (2008)


\bibitem{blm}  Berezansky, Y.M., Lytvynov, E., Mierzejewski, D.A.: The Jacobi field of a L\'evy process.  Ukrainian Math. J. 55, 853--858 (2003)

\bibitem{BUS} Berezansky, Y.M., Sheftel, Z.G., Us, G.F.: Functional analysis. Vol.~I, II. Birkh\"auser Verlag, Basel, 1996

\bibitem{BS} Biane, P., Speicher, R.: Stochastic calculus with respect to free Brownian motion and analysis on Wigner space. Probab.\ Theory Related Fields 112,  373--409 (1998) 

\bibitem{bd} Bo\.zejko, M., Demni, N.: Generating functions of Cauchy--Stieltjes type for orthogonal polynomials. Infin.\ Dimens.\ Anal.\ Quantum Probab.\ Relat.\ Top.\ 12,  91--98 (2009)

\bibitem{BLS} Bo\.zejko, M., Leinert, M., Speicher, R.: Convolutions and limit theoerems for conditionally free random variables. Pacific J. Math.\ 175, 357--388 (1996)

\bibitem{bl} Bo\.zejko, M., Lytvynov, E.: Meixner class of  non-commutative generalized stochastic processes with freely independent values I. A characterization.  Comm. Math. Phys. 292, 99--129
(2009) 


\bibitem{chihara} Chihara, T. S.: An introduction to orthogonal polynomials. Gordon and Breach Science Publishers, New York, London, Paris,  1978

\bibitem{HP} Hudson, R. L.; Parthasarathy, K. R.: Quantum Ito's formula and stochastic evolutions. Comm.\ Math.\ Phys.\ 93, 301--323 (1984). 

\bibitem{Ly1}  Lytvynov, E.: Polynomials of Meixner's type in infinite dimensions---Jacobi fields and orthogonality measures. J. Funct.\ Anal.\ 200, 118--149 (2003)

\bibitem{Ly2}  Lytvynov, E.: Orthogonal decompositions for L\'evy processes with an application to the gamma, Pascal, and Meixner processes. Infin.\ Dimens.\ Anal.\ Quantum Probab.\ Relat.\ Top.\ 6,  73--102 (2003)

\bibitem{LR} Lytvynov, E., Rodionova, I.: Lowering and raising operators for the free Meixner class of orthogonal polynomials.  Infin.\ Dimens.\ Anal.\ Quantum Probab.\ Relat.\ Top.\ 12, 387--399 (2009)



\bibitem{Meixner} Meixner, J.: Orthogonale Polynomsysteme mit einem besonderen
Gestalt der erzeugenden Funktion.  J. London Math.\ Soc.\  9,  6--13 (1934)


\bibitem{parthasarathy} Parthasarathy, K.R.: An introduction to quantum stochastic calculus.  Basel: Birkh\"auser Verlag,  1992

\bibitem{Rod} Rodionova, I.: Analysis connected with generating functions of exponential type in one and infinite dimensions. Methods Funct.\ Anal.\ Topology 11,  275--297 (2005).


\bibitem{SY}  Saitoh, N., Yoshida, H.: The infinite divisibility and orthogonal polynomials with a constant recursion formula in free probability theory. Probab.\ Math.\ Statist.\ 21,  159--170 (2001)

\bibitem{Yosida} Yosida, K.:  Functional analysis. Sixth edition.  Springer-Verlag, Berlin, New York, 1980. 

\end{thebibliography}
\end{document}